\newtheorem{theorem}{Theorem}[section]
\newtheorem{lemma}[theorem]{Lemma}
\newtheorem{definition}[theorem]{Definition}
\newtheorem{remark}[theorem]{Remark}
\newtheorem{conjecture}[theorem]{Conjecture}
\newtheorem{example}[theorem]{Example}
\newcommand{\Ann}{\operatorname{Ann}}
\newcommand{\SLP}{\mathrm{SLP}}
\newcommand{\semicolon}{\mathbin{;}}
\begin{document}

\title{Failure of the Lefschetz property for the Graphic Matroid}
\author{Ryo Takahashi}
\address{Mathematical Institute, Graduate School of Science, Tohoku University, Sendai, Japan}
\date{\today}

\begin{abstract}
    We consider the strong Lefschetz property for standard graded Artinian Gorenstein algebras. Such an algebra has a presentation of the quotient algebra of the ring of the differential polynomials modulo the annihilator of some homogeneous polynomial. There is a characterization of the strong Lefschetz property for such an algebra by the non-degeneracy of the higher Hessian matrix of the homogeneous polynomial.
    Maeno and Numata conjectured that if such an algebra is defined by the basis generating polynomial of any matroid, then it has the strong Lefschetz property.
    For this conjecture, we give counterexamples that are associated with graphic matroids. We prove the degeneracy of the higher Hessian matrix by constructing a non-zero element in the kernel of that matrix.
\end{abstract}

\maketitle

\section{Introduction}\label{section:introduction}

The Lefschetz property for Artinian Gorenstein algebras is inspired by the Hard Lefschetz Theorem on the cohomology of smooth complex projective varieties.

Let $n$ be a positive integer and $\mathbb{K}$ a field of characteristic zero. The polynomial algebra $\mathbb{K}[x_1,\dots,x_n]$ is regarded as a module over the algebra $Q\coloneqq\mathbb{K}[\partial_1,\dots,\partial_n]$ where $\partial_i\coloneqq\frac{\partial}{\partial x_i}$ for $i=1,\dots,n$.

For a homogeneous polynomial $f\in\mathbb{K}[x_1,\dots,x_n]$ of degree $d$, let
\begin{align*}
    \Ann_Q(f)&\coloneqq\{\alpha\in Q\mid\alpha f=0\},\\
    A&\coloneqq Q/\!\Ann_Q(f).
\end{align*}

Since $f$ is homogeneous, $\Ann_Q(f)$ is a homogeneous ideal of $Q$. Thus the algebra $A$ can be decomposed into
\[A=\bigoplus_{i=0}^d A_i\]
as a graded Artinian algebra. Note that for all $i>d$, the homogeneous part of degree $i$ of $A$ is equal to $\{0\}$.

Furthermore, the algebra $A$ is a \emph{Poincar\'{e} duality algebra}. In other words, $A_d$ is congruent to the ground field $\mathbb{K}$ of characteristic zero; and the bilinear pairing
\[A_i\times A_{d-i}\rightarrow A_d\]
is non-degenerate for all $i=0,\dots,d$. It is known that a graded Artinian algebra is Gorenstein if and only if it is a Poincar\'{e} duality algebra (see~\cite{harima2013lefschetz}*{Theorem~2.79}). Hence, the algebra $A$ is Gorenstein. The number $d$ is called the \emph{socle degree} of $A$. Conversely, according to~\cite{harima2013lefschetz}*{Lemma~3.74}, any standard graded Artinian Gorenstein algebra has a presentation $Q/\!\Ann_Q(f)$ with some homogeneous polynomial $f$.

We say that the algebra $A$ has the \emph{strong Lefschetz property} if there exists an element $\ell\in A_1$ such that the multiplication map
\[\times\ell^k\colon A_i\rightarrow A_{i+k}\]
has full rank for all $i=0,\dots,d-1$ and $k=1,\dots,d-i$. A weakening of this definition to only $k=1$ is called the \emph{weak Lefschetz property}.

\subsubsection*{Conjecture}
In the papers~\cites{maeno2012sperner,maeno2016sperner}, Maeno and Numata conjectured that if $f$ is a basis generating polynomial of any matroid, then the algebra $A$ has the strong Lefschetz property. They showed this conjecture for certain matroids to prove the Sperner property of modular geometric lattices.

\subsubsection*{Previous work}
The strong Lefschetz property at $i=1$ is studied in \cites{murai2021strictness,kirchhoff,yazawa2021eigenvalues} as it is related to the Hessian matrix of the polynomial $f$. As the most general result, Murai, Nagaoka, and Yazawa~\cite{murai2021strictness}*{Theorem~3.8~and~Remark~3.9} showed that under the conjecture's condition (i.e., $f$ is a basis generating polynomial of any matroid), the multiplication map
\[\times\ell^k\colon A_1\rightarrow A_{1+k}\]
has full rank for all $k=1,\dots,d-1$, where $\ell=a_1\partial_1+\dots+a_n\partial_n\in A_1$ for any $(a_1,\dots,a_n)\in\mathbb{R}_{>0}^n$.

\subsubsection*{Our contributions}
We try to verify this conjecture by computation with the mathematical software system \texttt{SageMath}~\cite{sagemath}. We concentrate on a class of matroids called \emph{graphic matroids}. For details of graphs and matroids, see~\cite{oxleymatroidtheory}. By employing an enumeration of all simple graphs, we construct graphic matroids of them and check whether the conjecture holds.

Although the conjecture is true for all graphs of seven or fewer vertices, we found counterexamples associated with graphs of eight vertices. One of them (see \Cref{figure:SLP_3 in introduction}), the algebra $A=Q/\!\Ann_Q(f)$, with the smallest \emph{codimension} $\dim_{\mathbb{R}}A_1$ has the following characteristic values: the number of variables $n=13$, the socle degree $d=7$, the minimal number of generators $\mu(\Ann_Q(f))=69$, and the \emph{Hilbert function} $(\dim_{\mathbb{R}}A_i)_{i=0}^d=(1,13,70,166,166,70,13,1)$. The algebra $A$ does not have the strong Lefschetz property at $i=3$, i.e., there is no element $\ell\in A_1$ such that the multiplication map
\[\times\ell\colon A_3\rightarrow A_4\]
has full rank. It means that the algebra $A$ does not even have the weak Lefschetz property.

Whether there exists a case that fails at $i=2$ remains unknown. For a setting without any constraints on the polynomial, refer to \Cref{example: ikeda's example}.

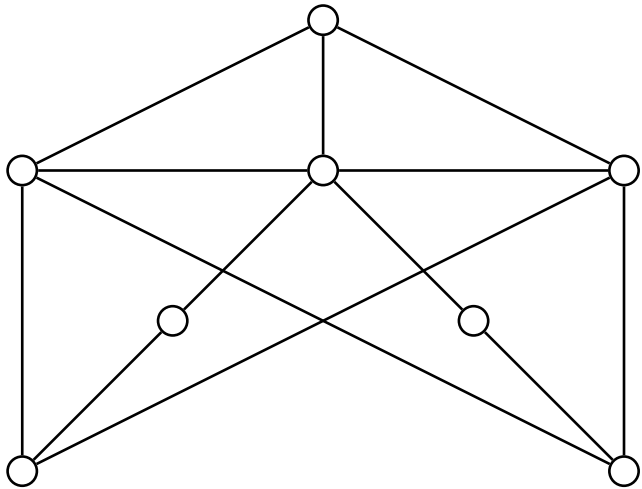
\begin{figure}[tb]
    \centering
    \begin{tikzpicture}
        \node[draw,circle,line width=1pt](5) at (0,2){};
        \node[draw,circle,line width=1pt](2) at (4,0){};
        \node[draw,circle,line width=1pt](8) at (0,0){};
        \node[draw,circle,line width=1pt](1) at (-4,0){};
        \node[draw,circle,line width=1pt](4) at (2,-2){};
        \node[draw,circle,line width=1pt](7) at (4,-4){};
        \node[draw,circle,line width=1pt](3) at (-2,-2){};
        \node[draw,circle,line width=1pt](6) at (-4,-4){};

        \draw[line width=1pt](1)--(5);
        \draw[line width=1pt](1)--(6);
        \draw[line width=1pt](1)--(7);
        \draw[line width=1pt](1)--(8);
        \draw[line width=1pt](2)--(5);
        \draw[line width=1pt](2)--(6);
        \draw[line width=1pt](2)--(7);
        \draw[line width=1pt](2)--(8);
        \draw[line width=1pt](3)--(6);
        \draw[line width=1pt](3)--(8);
        \draw[line width=1pt](4)--(7);
        \draw[line width=1pt](4)--(8);
        \draw[line width=1pt](5)--(8);
    \end{tikzpicture}
    \caption{A graph corresponding to one of the counterexamples}
    \label{figure:SLP_3 in introduction}
\end{figure}

\subsubsection*{Organization}
The rest of this paper is organized as follows. \Cref{section:preliminaries} contains detailed settings of the strong Lefschetz property and the conjecture. \Cref{section:SLP_3} describes our computation, the failure of the strong Lefschetz property at $i=3$, and the details of counterexamples.

Finally, we note the partial failure of the strong Lefschetz property at $i=2$ in \Cref{section:SLP_2}. In contrast to the previous work, the element $\ell=\partial_1+\dots+\partial_n\in A_1$ is not a universal solution, despite the fact that $(1,\dots,1)\in\mathbb{R}_{>0}^n$.

\section{Preliminaries}\label{section:preliminaries}

\Cref{subsection:strong lefschetz property} contains the details of the strong Lefschetz property for graded Artinian Gorenstein algebras. The conjecture is in \Cref{subsection:basis generating polynomial}.

\subsection{Strong Lefschetz Property}\label{subsection:strong lefschetz property}

We assume that $\mathbb{K}=\mathbb{R}$. Let $f\in\mathbb{R}[x_1,\dots,x_n]$ be a homogeneous polynomial of degree $d$. The algebra
\[A=A(f)\coloneqq Q/\!\Ann_Q(f)\]
is a standard graded Artinian Gorenstein algebra and can be decomposed into
\[A=\bigoplus_{i=0}^d A_i.\]
Hereafter, we represent elements of $A$ in terms of elements of $Q$ without causing ambiguity.

The Poincar\'{e} duality algebra $A$ satisfies the following properties:
\begin{itemize}
    \item The linear map
    \[[\bullet]\colon A_d\rightarrow\mathbb{R}\semicolon\quad[\alpha]\coloneqq\alpha f\]
    is the isomorphism from $A_d$ to $\mathbb{R}$.
    \item For each $i=0,\dots,d$, the bilinear form
    \[A_i\times A_{d-i}\rightarrow\mathbb{R}\semicolon\quad(\xi,\eta)\mapsto[\xi\eta]\]
    is non-degenerate.
\end{itemize}

Next, we define the strong Lefschetz property of the algebra $A$.
\begin{definition}[strong Lefschetz property (in the narrow sense)]
    Let $k\le d/2$ be a non-negative integer. We say that $A$ has the \emph{strong Lefschetz property at degree $k$}, shortly $\SLP_k$, if there exists an element $\ell\in A_1$ such that the multiplication map
    \[\times\ell^{d-2k}\colon A_k\rightarrow A_{d-k}\]
    is an isomorphism. In addition, if $A$ has the $\SLP_k$ for all $k=0,\dots,\lfloor d/2\rfloor$ with a common element $\ell\in A_1$, we say that $A$ has the \emph{strong Lefschetz property} and $\ell$ is a \emph{Lefschetz element}.
\end{definition}
\begin{remark}
    For the Poincar\'{e} duality algebra $A$, this definition of the strong Lefschetz property is equivalent to the one given in \Cref{section:introduction}. Specifically, for any $i=0,\dots,d-1$ and $k=1,\dots,d-i$, the following implications hold:
    \begin{itemize}
        \item If $i+k\le d-i$, then the composition
        \[A_i\xrightarrow{\times\ell^k}A_{i+k}\xrightarrow{\times\ell^{d-2i-k}}A_{d-i}\]
        is bijective, which implies that $A_i\xrightarrow{\times\ell^k}A_{i+k}$ is injective.
        \item If $i+k> d-i$, then the composition
        \[A_{d-i-k}\xrightarrow{\times\ell^{2i+k-d}}A_i\xrightarrow{\times\ell^k}A_{i+k}\]
        is bijective, which implies that $A_i\xrightarrow{\times\ell^k}A_{i+k}$ is surjective.
    \end{itemize}
\end{remark}

To test the strong Lefschetz property, we employ the higher Hessian matrix.

\begin{definition}[higher Hessian matrix]
    Let $k$ be a non-negative integer and $B_k=\{\alpha_1,\dots,\alpha_m\}$ be a set of homogeneous polynomials of degree $k$ in $Q$.
    For polynomial $g\in\mathbb{R}[x_1,\dots,x_n]$, we define an $m\times m$ polynomial matrix $\bm{H}_{B_k}(g)$ by
    \[\left(\bm{H}_{B_k}(g)\right)_{i,j}\coloneqq(\alpha_i\alpha_j)g\quad(i,j=1,\dots,m).\]
    This $\bm{H}_{B_k}(g)$ is called the \emph{$k$-th Hessian matrix} of $g$ with respect to $B_k$.
\end{definition}
When $B_1=\{\partial_1,\dots,\partial_n\}$, the first Hessian matrix $\bm{H}_{B_1}(g)$ coincides with the usual Hessian matrix of $g$.

The strong Lefschetz property of the algebra $A$ can be examined using this matrix.

\begin{theorem}[\cite{harima2013lefschetz}*{Theorem~3.76},\cite{MR2594646}*{Theorem~3.1},\cite{watanabe2000hessian}*{Theorem~4}]\label{theorem:SLP check}
    Let $k\le d/2$ be a non-negative integer and $B_k=\{\alpha_1,\dots,\alpha_m\}$ be any $\mathbb{R}$-basis of $A_k$.
    For any $(a_1,\dots,a_n)\in\mathbb{R}^n$, the algebra $A$ has the $\SLP_k$ with an element $\ell=a_1\partial_1+\dots+a_n\partial_n\in A_1$ if and only if $\bm{H}_{B_k}(f)(a_1,\dots,a_n)$ is non-degenerate where
    \[(\bm{H}_{B_k}(f)(a_1,\dots,a_n))_{i,j}\coloneqq((\alpha_i\alpha_j)f)(a_1,\dots,a_n)\quad(i,j=1,\dots,m).\]
\end{theorem}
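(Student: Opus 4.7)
The plan is to convert the rank question for the multiplication map $\times\ell^{d-2k}\colon A_k\to A_{d-k}$ into a non-degeneracy question for a matrix, by composing with the Poincar\'e duality isomorphism, and then to identify that matrix with $(d-2k)!$ times the evaluated higher Hessian.

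First I would set up the comparison map. The Poincar\'e pairing $A_k\times A_{d-k}\to\mathbb{R}$, $(\xi,\eta)\mapsto[\xi\eta]$, is non-degenerate, so it induces an isomorphism $\Phi\colon A_{d-k}\to A_k^\ast$ defined by $\Phi(\eta)(\xi)=[\xi\eta]$. Hence $\times\ell^{d-2k}\colon A_k\to A_{d-k}$ is an isomorphism if and only if the composition $\Phi\circ(\times\ell^{d-2k})\colon A_k\to A_k^\ast$ is. Using the basis $B_k=\{\alpha_1,\dots,\alpha_m\}$ of $A_k$ on the source and the dual basis on the target, the matrix $M$ of this composition has entries
\[M_{i,j}=\Phi(\ell^{d-2k}\alpha_j)(\alpha_i)=[\alpha_i\alpha_j\ell^{d-2k}]=(\alpha_i\alpha_j\ell^{d-2k})f.\]

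The next step, which is the computational heart, is the Euler-type identity
\[(a_1\partial_1+\dots+a_n\partial_n)^m\,g=m!\,g(a_1,\dots,a_n)\]
for any homogeneous $g\in\mathbb{R}[x_1,\dots,x_n]$ of degree $m$. This follows from a direct multinomial expansion together with $\partial^\beta x^\alpha=\alpha!$ when $\beta=\alpha$ and $0$ otherwise for $|\beta|=|\alpha|$; I would state it as a preliminary lemma. Applying it to $g\coloneqq(\alpha_i\alpha_j)f$, which is homogeneous of degree $d-2k$, gives
\[M_{i,j}=\ell^{d-2k}((\alpha_i\alpha_j)f)=(d-2k)!\cdot((\alpha_i\alpha_j)f)(a_1,\dots,a_n),\]
so $M=(d-2k)!\cdot\bm{H}_{B_k}(f)(a_1,\dots,a_n)$.

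Finally I would assemble the equivalence: since $\Phi$ is an isomorphism and $(d-2k)!\ne 0$ (characteristic zero), $\times\ell^{d-2k}$ is an isomorphism if and only if $\bm{H}_{B_k}(f)(a_1,\dots,a_n)$ is non-degenerate, which by definition is $\SLP_k$ with the element $\ell=a_1\partial_1+\dots+a_n\partial_n$.

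I do not anticipate a serious obstacle: the main subtlety to handle carefully is the identification of the dual basis so that the matrix entry really is $[\alpha_i\alpha_j\ell^{d-2k}]$ (with the indices in the correct positions) and the verification that the Hessian's symmetry is compatible with the symmetry of the pairing. A small but important point is that $\alpha_i,\alpha_j$ are well-defined only modulo $\Ann_Q(f)$, so I would note that $(\alpha_i\alpha_j)f$ depends only on the classes in $A_k$, which makes the matrix entries independent of the lifts to $Q$.
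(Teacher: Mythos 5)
Your proposal is correct and follows essentially the same route as the paper: both reduce $\SLP_k$ via Poincar\'e duality to the non-degeneracy of the bilinear form $(\xi,\eta)\mapsto[\ell^{d-2k}\xi\eta]$ on $A_k$, and both identify its matrix with $(d-2k)!\,\bm{H}_{B_k}(f)(a_1,\dots,a_n)$ using the Euler-type identity $\ell^{m}g=m!\,g(a_1,\dots,a_n)$ for $g$ homogeneous of degree $m$. Your added remarks on the dual-basis bookkeeping and on well-definedness modulo $\Ann_Q(f)$ are sound but only make explicit what the paper leaves implicit.
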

\begin{proof}
    Since the algebra $A$ is a Poincar\'{e} duality algebra, $A$ has the $\SLP_k$ with an element $\ell$ if and only if a bilinear form
    \[A_k\times A_k\rightarrow\mathbb{R}\semicolon\quad(\xi,\eta)\mapsto[\ell^{d-2k}\xi\eta]\]
    is non-degenerate. The representation matrix of this bilinear form with respect to the $\mathbb{R}$-basis $B_k$ of $A_k$ has the $(i,j)$-entry
    \[[\ell^{d-2k}\alpha_i\alpha_j]=(\ell^{d-2k}\alpha_i\alpha_j)f=(d-2k)!\,((\alpha_i\alpha_j)f)(a_1,\dots,a_n)\quad(i,j=1,\dots,m).\]
    The last equality is due to Euler's homogeneous function theorem. Thus the representation matrix is $(d-2k)!\,\bm{H}_{B_k}(f)(a_1,\dots,a_n)$.
\end{proof}

\begin{remark}
    The algebra $A$ has the $\SLP_k$ if and only if the polynomial $\det\bm{H}_{B_k}(f)$ is non-zero. Furthermore, if $A$ has the $\SLP_k$ for all $k=0,\dots,\lfloor d/2\rfloor$, then there exists a common non-root $(a_1,\dots,a_n)$ of the polynomials $\det\bm{H}_{B_k}(f)$. In this case, the algebra $A$ has the strong Lefschetz property with the Lefschetz element $\ell=a_1\partial_1+\dots+a_n\partial_n$.
\end{remark}

We demonstrate the degeneracy of the higher Hessian matrix.
\begin{example}[\cite{ikeda1996results}*{Example~4.4}]\label{example: ikeda's example}
    Let $f\coloneqq x_1^3x_2x_3+x_1x_2^3x_4+x_3^3x_4^2\in\mathbb{R}[x_1,\dots,x_4]$. The set of polynomials
    \[B_2\coloneqq\{\partial_1^2,\partial_1\partial_2,\partial_1\partial_3,\partial_1\partial_4,\partial_2^2,\partial_2\partial_3,\partial_2\partial_4,\partial_3^2,\partial_3\partial_4,\partial_4^2\}\subset A_2\]
    forms an $\mathbb{R}$-basis of $A_2$. The corresponding higher Hessian matrix is given by
    \[\bm{H}_{B_2}(f)=\begin{pmatrix}
        0 & 6 x_3 & 6 x_2 & 0 & 0 & 6 x_1 & 0 & 0 & 0 & 0 \\
        6 x_3 & 0 & 6 x_1 & 0 & 6 x_4 & 0 & 6 x_2 & 0 & 0 & 0 \\
        6 x_2 & 6 x_1 & 0 & 0 & 0 & 0 & 0 & 0 & 0 & 0 \\
        0 & 0 & 0 & 0 & 6 x_2 & 0 & 0 & 0 & 0 & 0 \\
        0 & 6 x_4 & 0 & 6 x_2 & 0 & 0 & 6 x_1 & 0 & 0 & 0 \\
        6 x_1 & 0 & 0 & 0 & 0 & 0 & 0 & 0 & 0 & 0 \\
        0 & 6 x_2 & 0 & 0 & 6 x_1 & 0 & 0 & 0 & 0 & 0 \\
        0 & 0 & 0 & 0 & 0 & 0 & 0 & 0 & 12 x_4 & 12 x_3 \\
        0 & 0 & 0 & 0 & 0 & 0 & 0 & 12 x_4 & 12 x_3 & 0 \\
        0 & 0 & 0 & 0 & 0 & 0 & 0 & 12 x_3 & 0 & 0
    \end{pmatrix}.\]
    By calculating the determinant we confirm that $\det\bm{H}_{B_2}(f)=0$, which verifies the degeneracy. Additionally, this result is observed through the equation
    \[\bm{H}_{B_2}(f)\times(0,0,x_1x_2^2,x_1^3,0,-x_2^3,-x_1^2x_2,0,0,0)^\mathrm{T}=\bm{0}.\]

    Consequently, the algebra $A$ does not have the $\SLP_2$. The Hilbert function of $A$ is $(1,4,10,10,4,1)$.
\end{example}

\subsection{Basis Generating Polynomial}\label{subsection:basis generating polynomial}

Let $G$ be a connected graph with $d+1$ vertices and $n$ edges. We number the edges one through $n$ and identify the edges with the numbers.

A subgraph $T$ of the graph $G$ is called a \emph{spanning tree} of $G$ if $T$ is connected graph on the same vertices as $G$ without cycles.
\begin{definition}[basis generating polynomial (for graphs)]
    The \emph{basis generating polynomial} $f_G\in\mathbb{R}[x_1,\dots,x_n]$ of the graph $G$ is defined as the sum, over all spanning trees $T$ of $G$, of the products of $x_e$ for the edges $e$ in $T$. More formally,
    \[f_G\coloneqq\sum_{T}\prod_{e}x_e.\]
\end{definition}
A spanning tree of $G$ is a \emph{basis} of the graphic matroid of $G$. The name \emph{basis} comes from the term of matroids. Since the number of edges in every spanning tree of $G$ is $d$, the polynomial $f_G$ is a homogeneous polynomial of degree $d$.

From the above, the conjecture we mentioned in \Cref{section:introduction} is as follows.
\begin{conjecture}[Maeno--Numata conjecture (for graphs)~\cite{maeno2012sperner}*{conjecture}]\label{conjecture}
    The algebra $A(f_G)$ has the strong Lefschetz property for any connected graph $G$.
\end{conjecture}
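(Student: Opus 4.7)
The plan is to use \Cref{theorem:SLP check} and its subsequent remark to reduce \Cref{conjecture}, for each $k=0,\dots,\lfloor d/2\rfloor$, to showing that the polynomial $\det\bm{H}_{B_k}(f_G)$ is not identically zero. A common non-root $(a_1,\dots,a_n)\in\mathbb{R}^n$ of the finite family of these determinants would then supply a Lefschetz element $\ell=a_1\partial_1+\dots+a_n\partial_n$ simultaneously witnessing $\SLP_k$ for all $k$. So the heart of the matter is the non-degeneracy of each higher Hessian matrix of $f_G$.

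My first attempt would be the specialization $(a_1,\dots,a_n)=(1,\dots,1)$, extending the strategy of Murai, Nagaoka, and Yazawa from $k=1$ to larger $k$. With respect to the monomial basis of $A_k$ obtained by reducing all degree-$k$ monomials of $Q$ modulo $\Ann_Q(f_G)$, each entry of $\bm{H}_{B_k}(f_G)(1,\dots,1)$ is a non-negative integer counting, up to factorial factors, the spanning trees of $G$ whose edge set contains a designated multiset of $2k$ indices. I would then try to prove non-singularity of this integer matrix along one of several standard routes: a deletion-contraction recursion relating the Hessians of $A(f_G)$, $A(f_{G\setminus e})$ and $A(f_{G/e})$; an appeal to the matrix-tree theorem combined with positivity of Laplacian-type minors; or a Hodge-theoretic signature argument for the bilinear form $(\xi,\eta)\mapsto[\ell^{d-2k}\xi\eta]$ on $A_k$, in the spirit of the Hodge--Riemann relations recently established for Chow rings of matroids.

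The main obstacle, which I expect to be essential, is the absence of any obvious positivity or matroidal identity forcing $\det\bm{H}_{B_k}(f_G)$ to be non-zero once $k\ge 2$. Already \Cref{example: ikeda's example} exhibits, for a polynomial in only four variables, an explicit non-zero vector annihilated by $\bm{H}_{B_2}$, and the combinatorial meaning of the Hessian entries for $f_G$ does not obviously preclude similar cancellations among spanning-tree counts. As a contingency, rather than a direct proof, I would therefore complement the above strategies with a systematic computational search: enumerate connected graphs by number of edges, compute a monomial basis of $A_k$ for small $k$, and solve the homogeneous linear system $\bm{H}_{B_k}(f_G)v=0$ symbolically to detect a non-trivial kernel. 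The principal practical difficulty here is the combinatorial growth of $\dim A_k$ with the number of edges, which forces attention to graphs of modest size; but if such a kernel vector is found, the conjecture fails for $G$, and exhibiting one explicitly is a shorter and more honest outcome than persisting with a proof scheme that the example above already flags as fragile.
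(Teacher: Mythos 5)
You cannot prove this statement, because the paper's main result is that it is \emph{false}: \Cref{conjecture} fails for certain connected graphs on eight vertices, the smallest being the thirteen-edge graph of \Cref{figure:SLP_3}, whose algebra $A(f_G)$ has Hilbert function $(1,13,70,166,166,70,13,1)$ and lacks the $\SLP_3$ (hence even the weak Lefschetz property). None of your three positive strategies can succeed in general, and your own diagnosis of why is accurate: there is no positivity mechanism forcing $\det\bm{H}_{B_k}(f_G)\ne 0$ once $k\ge 2$, and the cancellations you worry about really do occur. Your first-line specialization $(1,\dots,1)$ is additionally doomed even where the conjecture holds: \Cref{section:SLP_2} exhibits a graph that has the strong Lefschetz property but for which $\bm{H}_{B_2}(f_G)(1,\dots,1)$ is already singular, so $\ell=\partial_1+\dots+\partial_n$ is not a universal Lefschetz element.

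Your contingency plan is, in substance, what the paper does, so the honest outcome you anticipate is the correct one. Two refinements in the paper are worth noting, since your sketch is silent on them. First, a fully symbolic computation of $\det\bm{H}_{B_3}(f_G)$ (a $166\times 166$ matrix of polynomials in $13$ variables) is infeasible, so the paper screens all $\sepnum{}{,}{}{11117}$ simple connected eight-vertex graphs by evaluating the determinant at random integer points and invoking the Schwartz--Zippel bound (\Cref{lemma:schwartz-zippel}) to certify, with probability of error below $10^{-630}$, which determinants are plausibly identically zero; this yields $152$ candidates. Second, a numerical kernel at one point does not prove degeneracy, so for the verified counterexamples the paper reconstructs, by multivariate polynomial interpolation of normalized kernel vectors of $\bm{H}_{B_3}(f_G)(a_1,\dots,a_n)$ over a grid of integer points, an exact non-zero vector of degree-six polynomials $\bm{F}$ with $\bm{H}_{B_3}(f_G)\bm{F}=\bm{0}$ identically (\Cref{table:B_3 and F}); this identity can be checked by exact arithmetic and, via \Cref{theorem:SLP check}, rules out every $\ell\in A_1$ at once. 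If you pursue your computational route, you need this second step (or an exact symbolic rank computation) to turn a detected kernel into a disproof rather than numerical evidence.
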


\begin{remark}
    If edges $i$ and $j$ are multiple edges, then $\partial_i-\partial_j\in\Ann_Q(f_G)$. If an edge $k$ is a self loop, then $\partial_k\in\Ann_Q(f_G)$. Thus for \Cref{conjecture} we can ignore self loops and multiple edges; and focus only on \emph{simple} graphs, i.e., graphs without such edges.
\end{remark}

In the following, we say that the graph $G$ has the strong Lefschetz property or the $\SLP_k$ if the algebra $A(f_G)$ has the strong Lefschetz property or the $\SLP_k$, respectively.

\section{Failure of the $\SLP_3$}\label{section:SLP_3}

In this section, we provide a planar graph and a non-planar graph without the $\SLP_3$ as the counterexamples to \Cref{conjecture}. Our probabilistic and deterministic methods are described in \Cref{subsection:screening,subsection:verification}, respectively. In addition, we included the list of counterexample candidates in \Cref{subsection:list of counterexample candidates}.

\subsection{Screening}\label{subsection:screening}

To prove that the algebra $A=A(f_G)$ does not have the $\SLP_3$, we need to confirm that the third Hessian matrix $\bm{H}_{B_3}(f_G)$ is degenerate for an $\mathbb{R}$-basis $B_3$ of $A_3$. However, it is hard to compute the determinant of that large matrix of multivariate polynomials. For this reason, we first employ a randomized algorithm due to \Cref{lemma:schwartz-zippel}.

\begin{lemma}[Schwartz–-Zippel lemma~\cites{DEMILLO1978193,schwartz,zippel}]\label{lemma:schwartz-zippel}
    Let $g\in\mathbb{R}[x_1,\dots,x_n]$ be a non-zero polynomial. Suppose that $S$ is a finite subset of $\mathbb{R}$ and $r_1,\dots,r_n$ are selected at random independently and uniformly from $S$. Then,
    \[\operatorname{Pr}[g(r_1,\dots,r_n)=0]\le\frac{\deg g}{|S|}.\]
\end{lemma}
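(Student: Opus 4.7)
The plan is to proceed by induction on the number of variables $n$.

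The base case $n=1$ reduces to the classical fact that a non-zero univariate real polynomial of degree at most $d$ has at most $d$ roots; hence the probability that a uniformly chosen $r_1\in S$ is one of them is at most $d/|S|\le\deg g/|S|$.

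For the inductive step, I would single out the variable $x_1$ and write
$$g(x_1,\ldots,x_n)=\sum_{i=0}^{k}g_i(x_2,\ldots,x_n)\,x_1^i,$$
where $k$ is the highest power of $x_1$ that actually occurs in $g$, so that $g_k\neq 0$ and $\deg g_k\le\deg g-k$. Conditioning on $(r_2,\ldots,r_n)$, I would split into two cases according to whether $g_k(r_2,\ldots,r_n)$ vanishes. On the event $\{g_k(r_2,\ldots,r_n)\neq 0\}$, the specialisation $g(x_1,r_2,\ldots,r_n)$ is a non-zero univariate polynomial of degree exactly $k$, so the base case bounds the conditional probability that it vanishes at $r_1$ by $k/|S|$. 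On the complementary event, the inductive hypothesis applied to $g_k$, which is a non-zero polynomial in $n-1$ variables of degree at most $\deg g-k$, bounds the probability of that event itself by $(\deg g-k)/|S|$. Combining via the law of total probability yields
$$\operatorname{Pr}[g(r_1,\ldots,r_n)=0]\le\frac{\deg g-k}{|S|}+\frac{k}{|S|}=\frac{\deg g}{|S|},$$
as required.

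The argument is entirely standard, and the only real subtlety is choosing the right case split, namely on whether the leading coefficient in $x_1$ vanishes at the random sample; once this decomposition is in place, the bound falls out by adding the two contributions. I do not anticipate any serious obstacle in carrying this out.
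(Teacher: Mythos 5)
Your proof is correct: the induction on the number of variables, splitting on whether the leading coefficient $g_k$ of the highest power of $x_1$ vanishes at $(r_2,\dots,r_n)$, and adding the two bounds $(\deg g-k)/|S|$ and $k/|S|$, is exactly the classical Schwartz--Zippel argument. The paper itself gives no proof of this lemma --- it only cites the original sources --- and your argument is essentially the one found in those references, so there is nothing to reconcile.
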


Since the $\SLP_3$ is trivial or undefined for graphs of seven or fewer vertices, let $G$ be a graph of eight vertices. Every entry of $\bm{H}_{B_3}(f_G)$ is the sixth-order partial derivative of the polynomial $f_G$ of degree seven. Thus if the polynomial $g\coloneqq \det \bm{H}_{B_3}(f_G)$ is non-zero, then $\deg g=\dim_{\mathbb{R}}A_3$.

We repeated the following check 100 times: select $r_1,\dots,r_n$ at random independently and uniformly from the set $S=\{1,\dots,10^9\}$ and assure whether $g(r_1,\dots,r_n)=0$. If the polynomial $g$ is non-zero, then the probability that $g$ passes our check is less than $\left(\frac{\dim_{\mathbb{R}}A_3}{|S|}\right)^{100}$. Because our computation showed $\dim_{\mathbb{R}}A_3\le 500$, this probability is smaller than ${10}^{-630}$.

We found $152$ counterexample candidates out of $\sepnum{}{,}{}{11117}$ simple connected graphs of eight vertices, up to isomorphism of graphs. They are listed in \Cref{subsection:list of counterexample candidates}. Among them, for the one which has the smallest number of edges (\Cref{figure:SLP_3}, the same graph as \Cref{figure:SLP_3 in introduction} in \Cref{section:introduction}) and one of which is a planar graph (\Cref{figure:SLP_3 planar}), we verify that $\bm{H}_{B_3}(f_G)$ is degenerate for an $\mathbb{R}$-basis $B_3$ of $A_3$. The details of \Cref{figure:SLP_3} are in the next section.
\begin{remark}\label{remark: biconnected components}
    We only need to check \emph{biconnected graphs}, namely graphs whose connectivity is preserved when any one vertex is deleted. The reason is as follows. First, the polynomial $f_G$ is a product of the basis generating polynomials of each \emph{biconnected components} of $G$, that are maximal biconnected subgraphs of $G$. In consequence, according to \cite{harima2013lefschetz}*{Theorem~3.34~and~Proposition~3.77}, the strong Lefschetz property of each biconnected component derives the strong Lefschetz property of the whole graph $G$.

    The number of biconnected graphs of eight vertices is $\sepnum{}{,}{}{7123}$, up to isomorphism.
\end{remark}

\begin{figure}[tb]
    \begin{minipage}{.49\textwidth}
        \centering
        \begin{tikzpicture}[scale=0.7]
            \node[draw,circle,line width=1pt](5) at (0,2){};
            \node[draw,circle,line width=1pt](2) at (4,0){};
            \node[draw,circle,line width=1pt](8) at (0,0){};
            \node[draw,circle,line width=1pt](1) at (-4,0){};
            \node[draw,circle,line width=1pt](4) at (2,-2){};
            \node[draw,circle,line width=1pt](7) at (4,-4){};
            \node[draw,circle,line width=1pt](3) at (-2,-2){};
            \node[draw,circle,line width=1pt](6) at (-4,-4){};
    
            \draw[line width=1pt](1)--(5)node[pos=0.5,above]{$1$};
            \draw[line width=1pt](1)--(6)node[pos=0.5,left]{$2$};
            \draw[line width=1pt](1)--(7)node[pos=0.7,below]{$3$};
            \draw[line width=1pt](1)--(8)node[pos=0.5,above]{$4$};
            \draw[line width=1pt](2)--(5)node[pos=0.5,above]{$5$};
            \draw[line width=1pt](2)--(6)node[pos=0.7,below]{$6$};
            \draw[line width=1pt](2)--(7)node[pos=0.5,right]{$7$};
            \draw[line width=1pt](2)--(8)node[pos=0.5,above]{$8$};
            \draw[line width=1pt](3)--(6)node[pos=0.7,above]{$9$};
            \draw[line width=1pt](3)--(8)node[pos=0.3,above]{$10$};
            \draw[line width=1pt](4)--(7)node[pos=0.7,above]{$11$};
            \draw[line width=1pt](4)--(8)node[pos=0.3,above]{$12$};
            \draw[line width=1pt](5)--(8)node[pos=0.5,right]{$13$};
        \end{tikzpicture}
        \caption{The graph with the smallest number of edges without $\SLP_3$}
        \label{figure:SLP_3}
    \end{minipage}
    \begin{minipage}{.49\textwidth}
        \centering
        \begin{tikzpicture}[scale=0.7]
            \node[draw,circle,line width=1pt](4) at (0,0){};
            \node[draw,circle,line width=1pt](6) at (0,2){};
            \node[draw,circle,line width=1pt](7) at (-4,0){};
            \node[draw,circle,line width=1pt](8) at (4,0){};
            \node[draw,circle,line width=1pt](1) at (-2,2){};
            \node[draw,circle,line width=1pt](2) at (2,2){};
            \node[draw,circle,line width=1pt](5) at (0,4){};
            \node[draw,circle,line width=1pt](3) at (0,6){};
    
            \draw[line width=1pt](1)--(5);
            \draw[line width=1pt](1)--(6);
            \draw[line width=1pt](1)--(7);
            \draw[line width=1pt](2)--(5);
            \draw[line width=1pt](2)--(6);
            \draw[line width=1pt](2)--(8);
            \draw[line width=1pt](3)--(5);
            \draw[line width=1pt](3)to[out=180,in=90](7);
            \draw[line width=1pt](3)to[out=0,in=90](8);
            \draw[line width=1pt](4)--(6);
            \draw[line width=1pt](4)--(7);
            \draw[line width=1pt](4)--(8);
            \draw[line width=1pt](5)to[out=180,in=70](7);
            \draw[line width=1pt](5)to[out=0,in=110](8);
            \draw[line width=1pt](6)--(7);
            \draw[line width=1pt](6)--(8);
            \draw[line width=1pt](7)to[out=-30,in=-150](8);
        \end{tikzpicture}
        \caption{A planar graph without $\SLP_3$}
        \label{figure:SLP_3 planar}
    \end{minipage}
\end{figure}

\subsection{Verification}\label{subsection:verification}

Let $G$ be the graph shown in \Cref{figure:SLP_3}. The numbering of each edge is also shown in \Cref{figure:SLP_3}. The algebra $A=A(f_G)$ is mentioned in \Cref{section:introduction}: the number of variables $n=13$, the socle degree $d=7$, the minimal number of generators $\mu(\Ann_Q(f_G))=69$, and the Hilbert function $(\dim_{\mathbb{R}}A_i)_{i=0}^d=(1,13,70,166,166,70,13,1)$. Let $m\coloneqq\dim_{\mathbb{R}}A_3=166$.

We fix the $\mathbb{R}$-basis $B_3$ of $A_3$ as follows. Let $(i,j,k)\coloneqq\partial_i\partial_j\partial_k$ for $1\le i<j<k\le n$. The set of monomials $\{(i,j,k)\mid 1\le i<j<k\le n\}$ is a generating set of $A_3$ because the polynomial $f_G$ is a square-free polynomial. Enumerate this set in the lexicographic order, as $\beta_1=(1,2,3),\dots,\beta_{\binom{n}{3}}=(n-2,n-1,n)$. We use
\[B_3\coloneqq\left\{\beta_i\;\middle|\;1\le i\le\binom{n}{3},\langle\beta_1,\dots,\beta_{i-1}\rangle\subsetneq\langle \beta_1,\dots,\beta_i\rangle\right\}.\]

The third Hessian matrix $\bm{H}_{B_3}(f_G)$ contains $\sepnum{}{,}{}{8450}$ non-zero entries. To verify the degeneracy of $\bm{H}_{B_3}(f_G)$, we construct a non-zero vector of polynomials $\bm F=(F_1,\dots,F_m)^\mathrm{T}\in\mathbb{R}[x_1,\dots,x_n]^m$ such that $\bm{H}_{B_3}(f_G)\bm F=\bm{0}$. Such an $\bm{F}$ satisfies the following conditions, although these conditions will not characterize $\bm{F}$:
\begin{theorem}
    Let $k=0,\dots,\lfloor d/2\rfloor$, $B_k=\{\alpha_1,\dots,\alpha_m\}$ be any $\mathbb{R}$-basis of $A_k$, and $\bm F\coloneqq(F_1,\dots,F_m)^\mathrm{T}\in\mathbb{R}[x_1,\dots,x_n]^m$ with $\bm{H}_{B_k}(f)\bm F=\bm 0$. The following hold.
    \begin{enumerate}
        \item For any $(a_1,\dots,a_n)\in\mathbb{R}^n$, let $\ell\coloneqq a_1\partial_1+\dots+a_n\partial_n$ and
        \[\xi\coloneqq\sum_{i=1}^mF_i(a_1,\dots,a_n)\alpha_i.\]
        The element $\xi\in A_k$ is in the kernel of the multiplication map $\times\ell^{d-2k}\colon A_k\rightarrow A_{d-k}$.
        \item
        \[\sum_{i=1}^m F_i\cdot(\alpha_i f)=0.\]
    \end{enumerate}
\end{theorem}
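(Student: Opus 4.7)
The plan is to handle (1) directly from the Hessian realization of the $\SLP_k$ bilinear form used in \Cref{theorem:SLP check}, and then deduce (2) from (1) by specialization and polarization.

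For (1), I evaluate the polynomial identity $\bm{H}_{B_k}(f)\bm{F}=\bm{0}$ at the point $(a_1,\dots,a_n)$ to obtain $\bm{H}_{B_k}(f)(a_1,\dots,a_n)\,\bm{F}(a_1,\dots,a_n)=\bm{0}$ in $\mathbb{R}^m$. By the proof of \Cref{theorem:SLP check}, the matrix $(d-2k)!\,\bm{H}_{B_k}(f)(a_1,\dots,a_n)$ is the Gram matrix, with respect to $B_k$, of the bilinear form $A_k\times A_k\to\mathbb{R}$, $(\xi,\eta)\mapsto[\ell^{d-2k}\xi\eta]$. The vanishing above therefore says that $\xi=\sum_iF_i(a_1,\dots,a_n)\alpha_i\in A_k$ satisfies $[\ell^{d-2k}\xi\eta]=0$ for every $\eta\in A_k$, and non-degeneracy of the Poincar\'e pairing $A_{d-k}\times A_k\to\mathbb{R}$ then forces $\ell^{d-2k}\xi=0$ in $A_{d-k}$, which is (1).

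For (2), I bootstrap from (1). The conclusion $\ell^{d-2k}\xi=0$ says that the representative $\ell^{d-2k}\sum_iF_i(a_1,\dots,a_n)\alpha_i\in Q_{d-k}$ lies in $\Ann_Q(f)$; applying it to $f$ yields
\[
    \sum_{i=1}^{m} F_i(a_1,\dots,a_n)\,\ell^{d-2k}(\alpha_i f)(x_1,\dots,x_n)=0,
\]
an identity in $\mathbb{R}[a_1,\dots,a_n,x_1,\dots,x_n]$ since it holds for every $(a_1,\dots,a_n)\in\mathbb{R}^n$. Specializing $a_r=x_r$ for each $r$ and invoking the polarization identity
\[
    \sum_{|\gamma|=q}\binom{q}{\gamma}\,x^\gamma\,\partial^\gamma h(x)=\frac{p!}{(p-q)!}\,h(x)
\]
for $h\in\mathbb{R}[x_1,\dots,x_n]$ homogeneous of degree $p$ (which follows at once from $\partial^\gamma x^\mu=\frac{\mu!}{(\mu-\gamma)!}\,x^{\mu-\gamma}$ together with the multi-index Vandermonde identity $\sum_{|\gamma|=q}\binom{\mu}{\gamma}=\binom{|\mu|}{q}$), applied with $h=\alpha_i f$, $p=d-k$, $q=d-2k$, turns the identity above into $\frac{(d-k)!}{k!}\sum_i F_i\cdot(\alpha_i f)=0$; dividing by the nonzero factor $(d-k)!/k!$ yields (2).

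The only non-routine step is the polarization identity; once it is in hand, the substitution $a_r=x_r$ in the polynomial identity produced by (1) does the rest. No input is needed beyond Poincar\'e duality for $A$ and this combinatorial fact about the action of $\sum_{|\gamma|=q}\binom{q}{\gamma}x^\gamma\partial^\gamma$ on $\mathbb{R}[x_1,\dots,x_n]_p$.
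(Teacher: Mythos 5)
Your proof is correct and follows essentially the same route as the paper: part (1) is argued identically, by evaluating $\bm{H}_{B_k}(f)\bm F=\bm 0$ at $(a_1,\dots,a_n)$, reading the result as the vanishing of the linear form $\eta\mapsto[\ell^{d-2k}\xi\eta]$ via the Gram-matrix computation in \Cref{theorem:SLP check}, and invoking Poincar\'e duality. For part (2) the paper also deduces the claim from $\ell^{d-2k}\xi\in\Ann_Q(f)$ together with the iterated Euler identity, only it evaluates $\sum_iF_i\cdot(\alpha_if)$ at the point $(a_1,\dots,a_n)$ and uses the top-degree contraction $(\xi f)(a_1,\dots,a_n)=\frac{(\ell^{d-k}\xi)f}{(d-k)!}$, whereas you set $a_r=x_r$ and apply the degree-$(d-2k)$ polarization identity; these are two equivalent instances of the same principle, and both are valid.
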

\begin{proof}
\begin{enumerate}
    \item It suffices to show that for all $\eta\in A_k$, $[\ell^{d-2k}\xi\eta]=0$ because of the Poincar\'{e} duality of $A$. According to the proof of \Cref{theorem:SLP check}, a linear form $\eta\mapsto[\ell^{d-2k}\xi\eta]$ is represented with respect to the $\mathbb{R}$-basis $B_k$ of $A_k$ by
    \[(d-2k)!\bm{H}_{B_k}(f)(a_1,\dots,a_n)\bm{F}(a_1,\dots,a_n)=(d-2k)!(\bm{H}_{B_k}(f)\bm{F})(a_1,\dots,a_n)=\bm 0.\]
    \item It suffices to show that for all $(a_1,\dots,a_n)\in\mathbb{R}^n$,
    \[\left(\sum_{i=1}^m F_i\cdot(\alpha_i f)\right)(a_1,\dots,a_n)=0.\]
    Let $\ell$ and $\xi$ be as in (1), then
    \begin{align*}
        \left(\sum_{i=1}^m F_i\cdot(\alpha_i f)\right)(a_1,\dots,a_n)&=\left(\sum_{i=1}^m F_i(a_1,\dots,a_n)\cdot(\alpha_i f)\right)(a_1,\dots,a_n)\\
        &=(\xi f)(a_1,\dots,a_n)=\frac{(\ell^{d-k}\xi)f}{(d-k)!}=0
    \end{align*}
    because according to (1), $\ell^{d-2k}\xi\in\Ann_Q(f)$.
\end{enumerate}
\end{proof}

\subsubsection{Construction Algorithm of $\bm F$}
The vector of polynomials $\bm F$ is constructed by the following steps with the \emph{polynomial interpolation}.
\begin{enumerate}
    \item Find the maximum degree
    \[D_i\coloneqq\max_{j=1,\dots,m}\deg F_j(1,\dots,x_i,\dots,1),\]
    where $F_j(1,\dots,x_i,\dots,1)$ is the univariate polynomial in $x_i$ obtained by substituting one for $x_k$ other than $x_i$ in $F_j$.
    \item Construct the polynomials $\bm F$ by the multivariate polynomial interpolation from the values of $\bm{F}$ at each point $(a_1,\dots,a_n)\in X_1\times\dots\times X_n$, where $X_i$ is a set of $D_i+1$ points in $\mathbb{Z}$.
\end{enumerate}

The first step also employs the univariate polynomial interpolation. In both steps, we need to know the value of $\bm{F}$ at some points. Since $\bm{F}(a_1,\dots,a_n)\in\mathbb{R}^m$ is in the kernel of the matrix $\bm{H}_{B_3}(f_G)(a_1,\dots,a_n)$ for any $(a_1,\dots,a_n)\in\mathbb{R}^n$, we can obtain information about the value of $\bm{F}(a_1,\dots,a_n)$ from the linear subspace $\ker\bm{H}_{B_3}(f_G)(a_1,\dots,a_n)$ of $\mathbb{R}^m$.

Let $(a_1,\dots,a_n)\in\mathbb{Z}_{>0}^n$. We compute $\bm{F}(a_1,\dots,a_n)\in\mathbb{Z}^m$ by the following steps.
\begin{enumerate}
    \item Confirm that the kernel $\ker\bm{H}_{B_3}(f_G)(a_1,\dots,a_n)$ is of dimension one.
    \item Take a non-zero vector $\bm{F}'$ from $\ker\bm{H}_{B_3}(f_G)(a_1,\dots,a_n)$.
    \item Find the coefficient $c\ne 0$ such that every component of the vector $c\bm{F}'$ is an integer, the greatest common divisor of the components of $c\bm{F}'$ is one, and $cF'_{i_0}>0$ for a predetermined index $i_0$.
    \item Lastly, this $c\bm{F}'$ will be the value of $\bm{F}(a_1,\dots,a_n)$.
\end{enumerate}

By the confirmation of the first step, the vectors $\bm{F}'$ and $\bm{F}(a_1,\dots,a_n)$ are parallel. In some counterexample candidates, this confirmation fails because the kernel of the third Hessian matrix is most likely of dimension two.

The third step is a kind of normalization. We expected the greatest common divisor of the components of $\bm{F}(a_1,\dots,a_n)$ to be one by choosing $a_i$ to be a prime power. We also guessed that $F_{i_0}(a_1,\dots,a_n)$ is always positive.

\subsubsection{The table of $\bm F$}
Our $\mathbb{R}$-basis $B_3$ of $A_3$ and the vector of polynomials $\bm F$ for the graph (\Cref{figure:SLP_3}) are on \Cref{table:B_3 and F}. This $\bm F$ has $90$ zeros and $76$ homogeneous polynomials of degree six. We found that $\bm{F}$ defined similarly for the graph shown in \Cref{figure:SLP_3 planar} also contains only $154$ zeros and $137$ homogeneous polynomials of degree six.

The sequence of maximum degrees is
\[(D_1,\dots,D_n)=(1,1,1,0,1,1,1,0,2,2,2,2,1)\]
and our predetermined index $i_0$ is two. Finally it was revealed that $F_2=2(x_1+x_5+x_{13})(x_2+x_6)x_{10}^2x_{11}x_{12}$, and thus $F_2(a_1,\dots,a_n)$ is always positive on $\mathbb{Z}_{>0}^n$.

To compress the table, we introduce some abbreviations:
{\allowdisplaybreaks
\begin{align*}
L_{a,b}&\coloneqq x_a+x_b\quad\left((a,b)=(2,6),(3,7),(9,10),(11,12)\right),\\
L_{1,5,13}&\coloneqq x_{1} + x_{5} + x_{13},\\
Q_{1}&\coloneqq x_{2} x_{9} + x_{6} x_{9} + x_{2} x_{10} + x_{6} x_{10} + x_{9} x_{10},\\
Q_{2}&\coloneqq x_{2} x_{9} + x_{6} x_{9} - x_{2} x_{10} - x_{6} x_{10} + x_{9} x_{10},\\
Q_{3}&\coloneqq x_{2} x_{9} + x_{6} x_{9} - x_{2} x_{10} - x_{6} x_{10} - x_{9} x_{10},\\
Q_{4}&\coloneqq x_{3} x_{11} + x_{7} x_{11} + x_{3} x_{12} + x_{7} x_{12} + x_{11} x_{12},\\
Q_{5}&\coloneqq x_{3} x_{11} + x_{7} x_{11} - x_{3} x_{12} - x_{7} x_{12} + x_{11} x_{12},\\
Q_{6}&\coloneqq x_{3} x_{11} + x_{7} x_{11} - x_{3} x_{12} - x_{7} x_{12} - x_{11} x_{12},\\
C_{1}&\coloneqq x_{2} x_{9}^{2} + x_{6} x_{9}^{2} + x_{9}^{2} x_{10} + x_{2} x_{10}^{2} + x_{6} x_{10}^{2} + x_{9} x_{10}^{2},\\
C_{2}&\coloneqq x_{2} x_{9}^{2} + x_{6} x_{9}^{2} + x_{9}^{2} x_{10} - x_{2} x_{10}^{2} - x_{6} x_{10}^{2} + x_{9} x_{10}^{2},\\
C_{3}&\coloneqq x_{2} x_{9}^{2} + x_{6} x_{9}^{2} - x_{9}^{2} x_{10} - x_{2} x_{10}^{2} - x_{6} x_{10}^{2} - x_{9} x_{10}^{2},\\
C_{4}&\coloneqq x_{3} x_{11}^{2} + x_{7} x_{11}^{2} + x_{11}^{2} x_{12} + x_{3} x_{12}^{2} + x_{7} x_{12}^{2} + x_{11} x_{12}^{2},\\
C_{5}&\coloneqq x_{3} x_{11}^{2} + x_{7} x_{11}^{2} + x_{11}^{2} x_{12} - x_{3} x_{12}^{2} - x_{7} x_{12}^{2} + x_{11} x_{12}^{2},\\
C_{6}&\coloneqq x_{3} x_{11}^{2} + x_{7} x_{11}^{2} - x_{11}^{2} x_{12} - x_{3} x_{12}^{2} - x_{7} x_{12}^{2} - x_{11} x_{12}^{2}.\\
\end{align*}
}

\begin{longtable}[c]{r|c|lcr|c|l}
\caption{The basis $B_3$ and the vector $\bm{F}$}\\
& $\alpha_i$ & $F_i$ & & & $\alpha_i$ & $F_i$\\ \cline{1-3}\cline{5-7} \endfirsthead
& $\alpha_i$ & $F_i$ & & & $\alpha_i$ & $F_i$\\ \cline{1-3}\cline{5-7} \endhead
1 & $(1,2,3)$ & $L_{1,5,13}(Q_{1}C_{4}-Q_{4}C_{1})$ & & 84 & $(2,11,13)$ & $0$\\
2 & $(1,2,4)$ & $2x_{12}x_{11}L_{2,6}L_{1,5,13}x_{10}^{2}$ & & 85 & $(2,12,13)$ & $0$\\
3 & $(1,2,5)$ & $0$ & & 86 & $(3,4,5)$ & $2x_{10}x_{9}L_{3,7}L_{1,5,13}x_{12}^{2}$\\
4 & $(1,2,7)$ & $L_{1,5,13}Q_{4}C_{1}$ & & 87 & $(3,4,6)$ & $2x_{13}L_{3,7}x_{12}^{2}Q_{1}$\\
5 & $(1,2,8)$ & $-2x_{12}x_{11}L_{2,6}L_{1,5,13}x_{10}^{2}$ & & 88 & $(3,4,7)$ & $-2x_{13}x_{10}x_{9}C_{4}$\\
6 & $(1,2,9)$ & $2x_{12}x_{11}L_{9,10}L_{1,5,13}Q_{2}$ & & 89 & $(3,4,9)$ & $-2x_{13}x_{10}x_{9}L_{9,10}Q_{4}$\\
7 & $(1,2,10)$ & $0$ & & 90 & $(3,4,11)$ & $0$\\
8 & $(1,2,11)$ & $L_{11,12}L_{1,5,13}Q_{6}Q_{1}$ & & 91 & $(3,5,6)$ & $L_{1,5,13}Q_{1}C_{4}$\\
9 & $(1,2,12)$ & $-L_{1,5,13}Q_{1}C_{5}$ & & 92 & $(3,5,8)$ & $-2x_{10}x_{9}L_{3,7}L_{1,5,13}x_{12}^{2}$\\
10 & $(1,3,4)$ & $-2x_{10}x_{9}L_{3,7}L_{1,5,13}x_{12}^{2}$ & & 93 & $(3,5,9)$ & $L_{9,10}L_{1,5,13}Q_{4}Q_{3}$\\
11 & $(1,3,5)$ & $0$ & & 94 & $(3,5,10)$ & $-L_{1,5,13}Q_{4}C_{2}$\\
12 & $(1,3,6)$ & $L_{1,5,13}Q_{1}C_{4}$ & & 95 & $(3,5,11)$ & $2L_{11,12}x_{10}x_{9}L_{1,5,13}Q_{5}$\\
13 & $(1,3,8)$ & $2x_{10}x_{9}L_{3,7}L_{1,5,13}x_{12}^{2}$ & & 96 & $(3,5,12)$ & $0$\\
14 & $(1,3,9)$ & $-L_{1,5,13}Q_{4}C_{3}$ & & 97 & $(3,6,8)$ & $-2x_{13}L_{3,7}x_{12}^{2}Q_{1}$\\
15 & $(1,3,10)$ & $L_{9,10}L_{1,5,13}Q_{4}Q_{2}$ & & 98 & $(3,6,9)$ & $-2x_{13}L_{9,10}Q_{4}Q_{2}$\\
16 & $(1,3,11)$ & $-2L_{11,12}x_{10}x_{9}L_{1,5,13}Q_{5}$ & & 99 & $(3,6,10)$ & $0$\\
17 & $(1,3,12)$ & $0$ & & 100 & $(3,6,11)$ & $2x_{13}L_{11,12}Q_{5}Q_{1}$\\
18 & $(1,4,5)$ & $0$ & & 101 & $(3,6,12)$ & $0$\\
19 & $(1,4,6)$ & $0$ & & 102 & $(3,6,13)$ & $0$\\
20 & $(1,4,7)$ & $0$ & & 103 & $(3,7,9)$ & $0$\\
21 & $(1,4,9)$ & $-2x_{12}x_{11}x_{10}x_{9}L_{9,10}L_{1,5,13}$ & & 104 & $(3,7,10)$ & $0$\\
22 & $(1,4,11)$ & $2x_{12}x_{11}L_{11,12}x_{10}x_{9}L_{1,5,13}$ & & 105 & $(3,7,11)$ & $-2x_{13}L_{11,12}x_{10}x_{9}Q_{5}$\\
23 & $(1,5,9)$ & $0$ & & 106 & $(3,7,12)$ & $2x_{13}L_{11,12}x_{10}x_{9}Q_{5}$\\
24 & $(1,5,10)$ & $0$ & & 107 & $(3,8,9)$ & $2x_{13}x_{10}x_{9}L_{9,10}Q_{4}$\\
25 & $(1,5,11)$ & $0$ & & 108 & $(3,8,11)$ & $0$\\
26 & $(1,5,12)$ & $0$ & & 109 & $(3,9,10)$ & $0$\\
27 & $(1,6,7)$ & $0$ & & 110 & $(3,9,11)$ & $0$\\
28 & $(1,6,8)$ & $0$ & & 111 & $(3,9,12)$ & $0$\\
29 & $(1,6,9)$ & $-2x_{12}x_{11}L_{9,10}L_{1,5,13}Q_{2}$ & & 112 & $(3,9,13)$ & $0$\\
30 & $(1,6,10)$ & $0$ & & 113 & $(3,10,11)$ & $0$\\
31 & $(1,6,11)$ & $-L_{1,5,13}Q_{1}C_{6}$ & & 114 & $(3,10,13)$ & $0$\\
32 & $(1,6,12)$ & $L_{11,12}L_{1,5,13}Q_{5}Q_{1}$ & & 115 & $(3,11,13)$ & $0$\\
33 & $(1,7,8)$ & $0$ & & 116 & $(4,5,9)$ & $2x_{12}x_{11}x_{10}x_{9}L_{9,10}L_{1,5,13}$\\
34 & $(1,7,9)$ & $L_{9,10}L_{1,5,13}Q_{4}Q_{3}$ & & 117 & $(4,5,11)$ & $-2x_{12}x_{11}L_{11,12}x_{10}x_{9}L_{1,5,13}$\\
35 & $(1,7,10)$ & $-L_{1,5,13}Q_{4}C_{2}$ & & 118 & $(4,6,9)$ & $0$\\
36 & $(1,7,11)$ & $2L_{11,12}x_{10}x_{9}L_{1,5,13}Q_{5}$ & & 119 & $(4,6,11)$ & $-2x_{13}x_{12}x_{11}L_{11,12}Q_{1}$\\
37 & $(1,7,12)$ & $0$ & & 120 & $(4,7,9)$ & $2x_{13}x_{10}x_{9}L_{9,10}Q_{4}$\\
38 & $(1,8,9)$ & $2x_{12}x_{11}x_{10}x_{9}L_{9,10}L_{1,5,13}$ & & 121 & $(4,7,11)$ & $0$\\
39 & $(1,8,11)$ & $-2x_{12}x_{11}L_{11,12}x_{10}x_{9}L_{1,5,13}$ & & 122 & $(4,8,9)$ & $-4x_{13}x_{12}x_{11}x_{10}x_{9}L_{9,10}$\\
40 & $(1,9,10)$ & $0$ & & 123 & $(4,8,11)$ & $4x_{13}x_{12}x_{11}L_{11,12}x_{10}x_{9}$\\
41 & $(1,9,11)$ & $0$ & & 124 & $(4,9,11)$ & $0$\\
42 & $(1,9,12)$ & $0$ & & 125 & $(5,6,9)$ & $2x_{12}x_{11}L_{9,10}L_{1,5,13}Q_{2}$\\
43 & $(1,10,11)$ & $0$ & & 126 & $(5,6,11)$ & $L_{11,12}L_{1,5,13}Q_{6}Q_{1}$\\
44 & $(1,10,12)$ & $0$ & & 127 & $(5,6,12)$ & $-L_{1,5,13}Q_{1}C_{5}$\\
45 & $(1,11,12)$ & $0$ & & 128 & $(5,7,9)$ & $-L_{1,5,13}Q_{4}C_{3}$\\
46 & $(2,3,4)$ & $2x_{13}(L_{2,6}x_{10}^{2}Q_{4}-L_{3,7}x_{12}^{2}Q_{1})$ & & 129 & $(5,7,10)$ & $L_{9,10}L_{1,5,13}Q_{4}Q_{2}$\\
47 & $(2,3,5)$ & $L_{1,5,13}(Q_{4}C_{1}-Q_{1}C_{4})$ & & 130 & $(5,7,11)$ & $-2L_{11,12}x_{10}x_{9}L_{1,5,13}Q_{5}$\\
48 & $(2,3,6)$ & $0$ & & 131 & $(5,8,9)$ & $-2x_{12}x_{11}x_{10}x_{9}L_{9,10}L_{1,5,13}$\\
49 & $(2,3,8)$ & $2x_{13}(L_{3,7}x_{12}^{2}Q_{1}-L_{2,6}x_{10}^{2}Q_{4})$ & & 132 & $(5,8,11)$ & $2x_{12}x_{11}L_{11,12}x_{10}x_{9}L_{1,5,13}$\\
50 & $(2,3,9)$ & $2x_{13}L_{9,10}Q_{4}Q_{2}$ & & 133 & $(5,9,10)$ & $0$\\
51 & $(2,3,10)$ & $0$ & & 134 & $(5,9,11)$ & $0$\\
52 & $(2,3,11)$ & $-2x_{13}L_{11,12}Q_{5}Q_{1}$ & & 135 & $(5,9,12)$ & $0$\\
53 & $(2,3,12)$ & $0$ & & 136 & $(5,10,11)$ & $0$\\
54 & $(2,3,13)$ & $0$ & & 137 & $(5,10,12)$ & $0$\\
55 & $(2,4,5)$ & $-2x_{12}x_{11}L_{2,6}L_{1,5,13}x_{10}^{2}$ & & 138 & $(5,11,12)$ & $0$\\
56 & $(2,4,6)$ & $2x_{13}x_{12}x_{11}C_{1}$ & & 139 & $(6,7,9)$ & $2x_{13}L_{9,10}Q_{4}Q_{2}$\\
57 & $(2,4,7)$ & $-2x_{13}L_{2,6}x_{10}^{2}Q_{4}$ & & 140 & $(6,7,11)$ & $-2x_{13}L_{11,12}Q_{5}Q_{1}$\\
58 & $(2,4,9)$ & $0$ & & 141 & $(6,8,11)$ & $2x_{13}x_{12}x_{11}L_{11,12}Q_{1}$\\
59 & $(2,4,11)$ & $2x_{13}x_{12}x_{11}L_{11,12}Q_{1}$ & & 142 & $(6,9,11)$ & $0$\\
60 & $(2,5,7)$ & $L_{1,5,13}Q_{4}C_{1}$ & & 143 & $(6,9,12)$ & $0$\\
61 & $(2,5,8)$ & $2x_{12}x_{11}L_{2,6}L_{1,5,13}x_{10}^{2}$ & & 144 & $(6,10,11)$ & $0$\\
62 & $(2,5,9)$ & $-2x_{12}x_{11}L_{9,10}L_{1,5,13}Q_{2}$ & & 145 & $(6,10,12)$ & $0$\\
63 & $(2,5,10)$ & $0$ & & 146 & $(6,11,12)$ & $0$\\
64 & $(2,5,11)$ & $-L_{1,5,13}Q_{1}C_{6}$ & & 147 & $(6,11,13)$ & $0$\\
65 & $(2,5,12)$ & $L_{11,12}L_{1,5,13}Q_{5}Q_{1}$ & & 148 & $(6,12,13)$ & $0$\\
66 & $(2,6,9)$ & $2x_{13}x_{12}x_{11}L_{9,10}Q_{2}$ & & 149 & $(7,8,9)$ & $-2x_{13}x_{10}x_{9}L_{9,10}Q_{4}$\\
67 & $(2,6,10)$ & $-2x_{13}x_{12}x_{11}L_{9,10}Q_{2}$ & & 150 & $(7,9,10)$ & $0$\\
68 & $(2,6,11)$ & $0$ & & 151 & $(7,9,11)$ & $0$\\
69 & $(2,6,12)$ & $0$ & & 152 & $(7,9,12)$ & $0$\\
70 & $(2,7,8)$ & $2x_{13}L_{2,6}x_{10}^{2}Q_{4}$ & & 153 & $(7,9,13)$ & $0$\\
71 & $(2,7,9)$ & $-2x_{13}L_{9,10}Q_{4}Q_{2}$ & & 154 & $(7,10,11)$ & $0$\\
72 & $(2,7,10)$ & $0$ & & 155 & $(7,10,13)$ & $0$\\
73 & $(2,7,11)$ & $2x_{13}L_{11,12}Q_{5}Q_{1}$ & & 156 & $(8,9,11)$ & $0$\\
74 & $(2,7,12)$ & $0$ & & 157 & $(9,10,11)$ & $0$\\
75 & $(2,7,13)$ & $0$ & & 158 & $(9,10,12)$ & $0$\\
76 & $(2,8,9)$ & $0$ & & 159 & $(9,10,13)$ & $0$\\
77 & $(2,8,11)$ & $-2x_{13}x_{12}x_{11}L_{11,12}Q_{1}$ & & 160 & $(9,11,12)$ & $0$\\
78 & $(2,9,11)$ & $0$ & & 161 & $(9,11,13)$ & $0$\\
79 & $(2,9,12)$ & $0$ & & 162 & $(9,12,13)$ & $0$\\
80 & $(2,9,13)$ & $0$ & & 163 & $(10,11,12)$ & $0$\\
81 & $(2,10,11)$ & $0$ & & 164 & $(10,11,13)$ & $0$\\
82 & $(2,10,12)$ & $0$ & & 165 & $(10,12,13)$ & $0$\\
83 & $(2,11,12)$ & $0$ & & 166 & $(11,12,13)$ & $0$
\label{table:B_3 and F}
\end{longtable}

\subsection{The list of Counterexample Candidates}\label{subsection:list of counterexample candidates}

\Cref{table:counterexample list} is the full list of counterexample candidates. The meaning of each column is as follows.
\begin{description}
    \item[Hilbert function] A triplet $(\dim_{\mathbb{R}}A_1,\dim_{\mathbb{R}}A_2,\dim_{\mathbb{R}}A_3)$. Note $\dim_{\mathbb{R}}A_0=1$ and $\dim_{\mathbb{R}}A_k=\dim_{\mathbb{R}}A_{7-k}$ for $k=4,5,6,7$. According to \cite{murai2021strictness}*{Theorem~2.5}, the number of edges in the corresponding simple graph $G$ is equal to $\dim_{\mathbb{R}}A_1$.
    \item[Nullity] Minimal dimension of $\ker\bm{H}_{B_3}(f_G)$ by substituting random numbers.
    \item[Planarity] Whether the graph is planar or not.
    \item[Edge set] The edge set, but in compressed format. Let the vertex set be $\{1,\dots,8\}$. The edge set of any simple graph is a subset of the set $\{(i,j)\mid 1\le i<j\le 8\}$. Here the pair $(i,j)$ is the edge between vertices $i$ and $j$. Enumerate this set in the lexicographic order, as $e_1=(1,2),\dots,e_{28}=(7,8)$. Then any edge set can be represented by a 28-character binary string, where each edge is included when the corresponding digit is one.

    For example, the first row ``0001111001 1110010100 11001000'' means
    \begin{multline*}
        \{e_4,e_5,e_6,e_7,e_{10},e_{11},e_{12},e_{13},e_{16},e_{18},e_{21},e_{22},e_{25}\}\\
        =\{(1,5),(1,6),(1,7),(1,8),(2,5),(2,6),(2,7),(2,8),\\(3,6),(3,8),(4,7),(4,8),(5,8)\}.
    \end{multline*}
\end{description}

\begin{longtable}[c]{r|c|c|c|l}
\caption{The list of counterexample candidates}\\
& Hilbert function & Nullity & Planarity & Edge set\\ \hline \endfirsthead
& Hilbert function & Nullity & Planarity & Edge set\\ \hline \endhead
1 & $(13, 70, 166)$ & $1$ & No & 0001111001 1110010100 11001000\\
2 & $(14, 74, 170)$ & $1$ & No & 0010110001 1100011100 01001111\\
3 & $(14, 78, 190)$ & $1$ & Yes & 0001111001 1110001100 11010010\\
4 & $(15, 82, 193)$ & $1$ & Yes & 0001110001 1010001100 11111110\\
5 & $(15, 89, 233)$ & $1$ & Yes & 0010011001 1110111100 11010010\\
6 & $(15, 90, 229)$ & $1$ & Yes & 0001110001 1010011100 11011110\\
7 & $(15, 91, 222)$ & $1$ & No & 0010111001 1100011101 10001011\\
8 & $(15, 93, 252)$ & $1$ & Yes & 0010111001 1010101100 11111000\\
9 & $(15, 95, 274)$ & $1$ & No & 0001110001 1010011101 11011010\\
10 & $(15, 95, 282)$ & $1$ & No & 0010111001 1110101100 11101000\\
11 & $(15, 97, 260)$ & $2$ & No & 0010111001 1110011100 11011000\\
12 & $(15, 97, 275)$ & $1$ & No & 0001110001 1100101101 11001011\\
13 & $(16, 93, 222)$ & $1$ & No & 0010111001 1000011100 11011111\\
14 & $(16, 94, 236)$ & $1$ & Yes & 0011100001 0110011111 00011111\\
15 & $(16, 95, 233)$ & $1$ & Yes & 0001101001 1000101101 11011111\\
16 & $(16, 98, 254)$ & $1$ & Yes & 0010111001 1010101100 11111001\\
17 & $(16, 100, 260)$ & $2$ & No & 0010111001 1110011100 11011001\\
18 & $(16, 101, 257)$ & $1$ & No & 0011111001 1010001111 00011110\\
19 & $(16, 101, 259)$ & $1$ & No & 0001111001 1100110100 11011110\\
20 & $(16, 101, 284)$ & $1$ & No & 0010111001 1110101100 11101001\\
21 & $(16, 103, 279)$ & $1$ & No & 0001101001 1010101101 11011101\\
22 & $(16, 103, 302)$ & $1$ & No & 0011101001 0110011111 01011100\\
23 & $(16, 104, 278)$ & $1$ & No & 0001101001 1010101101 11011110\\
24 & $(16, 104, 285)$ & $2$ & Yes & 0001110001 1010101101 11011110\\
25 & $(16, 104, 289)$ & $1$ & Yes & 0010111001 1100110100 11011110\\
26 & $(16, 106, 280)$ & $1$ & No & 0010111001 1110011100 11011010\\
27 & $(16, 106, 292)$ & $2$ & No & 0010111001 1110101100 11111000\\
28 & $(16, 106, 294)$ & $2$ & No & 0011111001 1100011101 10001011\\
29 & $(16, 106, 303)$ & $1$ & No & 0011110001 0110011111 10001011\\
30 & $(16, 106, 308)$ & $1$ & No & 0001111001 1100101101 11001011\\
31 & $(16, 106, 309)$ & $1$ & No & 0010111001 1110101100 11101100\\
32 & $(16, 106, 309)$ & $2$ & No & 0011101001 1100011101 10011011\\
33 & $(16, 106, 312)$ & $1$ & No & 0001111001 1100101101 11011010\\
34 & $(16, 108, 314)$ & $1$ & No & 0010111001 1110111100 11011000\\
35 & $(17, 103, 257)$ & $1$ & No & 0011111001 1110011100 11000111\\
36 & $(17, 103, 259)$ & $1$ & No & 0001111001 1000101101 11011111\\
37 & $(17, 109, 292)$ & $2$ & No & 0010111001 1110101100 11111001\\
38 & $(17, 109, 294)$ & $2$ & No & 0011111001 0110011100 11111001\\
39 & $(17, 110, 280)$ & $1$ & No & 0010111001 1110011100 11011011\\
40 & $(17, 110, 283)$ & $1$ & No & 0001101001 1010101101 11011111\\
41 & $(17, 110, 291)$ & $1$ & Yes & 0001110001 1010101101 11011111\\
42 & $(17, 110, 293)$ & $1$ & Yes & 0011101001 0110011111 00011111\\
43 & $(17, 111, 314)$ & $1$ & No & 0010111001 1110111100 11011001\\
44 & $(17, 112, 306)$ & $1$ & No & 0011101001 0110011111 01011110\\
45 & $(17, 112, 307)$ & $1$ & No & 0011101001 0110011111 01011101\\
46 & $(17, 112, 312)$ & $1$ & No & 0001111001 1100101101 11011011\\
47 & $(17, 113, 309)$ & $1$ & No & 0010111001 1110101100 11101101\\
48 & $(17, 113, 309)$ & $2$ & No & 0011111001 0110011100 11110011\\
49 & $(17, 114, 320)$ & $1$ & No & 0011100010 1110101101 11011110\\
50 & $(17, 115, 312)$ & $1$ & No & 0010111001 1110101100 11111010\\
51 & $(17, 115, 315)$ & $1$ & No & 0010111001 1110101100 11101110\\
52 & $(17, 115, 315)$ & $2$ & No & 0011111001 0110011100 11111010\\
53 & $(17, 115, 347)$ & $1$ & No & 0011111001 0110011111 01011100\\
54 & $(17, 116, 286)$ & $1$ & No & 0010111001 1110011100 11011110\\
55 & $(17, 116, 292)$ & $1$ & Yes & 0011011001 1110011100 11011110\\
56 & $(17, 116, 316)$ & $1$ & No & 0001111001 1010101101 11011110\\
57 & $(17, 116, 321)$ & $1$ & No & 0011111001 1110011100 11001110\\
58 & $(17, 116, 328)$ & $1$ & No & 0011111001 1110011101 10001011\\
59 & $(17, 116, 330)$ & $1$ & No & 0011111011 1110011100 10001110\\
60 & $(17, 116, 340)$ & $1$ & No & 0001111001 1110101101 11001011\\
61 & $(17, 116, 340)$ & $1$ & No & 0001111001 1110101101 11010011\\
62 & $(17, 116, 343)$ & $1$ & No & 0011111001 0110011111 10001011\\
63 & $(17, 116, 345)$ & $1$ & Yes & 0011110001 0110011111 01011110\\
64 & $(17, 118, 344)$ & $1$ & No & 0001111001 1110101101 11011010\\
65 & $(17, 118, 349)$ & $1$ & No & 0011111001 1110011101 01011100\\
66 & $(17, 120, 350)$ & $1$ & No & 0011111001 1110011100 11111000\\
67 & $(18, 116, 286)$ & $1$ & No & 0010111001 1110011100 11011111\\
68 & $(18, 116, 292)$ & $1$ & Yes & 0011011001 1110011100 11011111\\
69 & $(18, 117, 320)$ & $1$ & No & 0011100010 1110101101 11011111\\
70 & $(18, 119, 311)$ & $1$ & No & 0011101001 0110011111 01011111\\
71 & $(18, 119, 312)$ & $1$ & No & 0010111001 1110101100 11111011\\
72 & $(18, 119, 315)$ & $1$ & No & 0010111001 1110101100 11101111\\
73 & $(18, 119, 315)$ & $2$ & No & 0011111001 0110011100 11111011\\
74 & $(18, 119, 316)$ & $1$ & No & 0001111001 1010101101 11011111\\
75 & $(18, 119, 321)$ & $1$ & No & 0011111001 1110011100 11001111\\
76 & $(18, 119, 328)$ & $1$ & No & 0011111001 1110011101 10001111\\
77 & $(18, 119, 330)$ & $1$ & No & 0011111001 0110011111 00011111\\
78 & $(18, 122, 344)$ & $1$ & No & 0001111001 1110101101 11011011\\
79 & $(18, 122, 347)$ & $1$ & No & 0011111001 0110011111 01011101\\
80 & $(18, 122, 349)$ & $1$ & No & 0011111001 0110011111 11001011\\
81 & $(18, 122, 349)$ & $1$ & No & 0011111001 0110011111 11010011\\
82 & $(18, 123, 345)$ & $1$ & Yes & 0011110001 0110011111 01011111\\
83 & $(18, 124, 350)$ & $1$ & No & 0011111001 1110011100 11111001\\
84 & $(18, 124, 353)$ & $1$ & No & 0011111001 0110011111 11011010\\
85 & $(18, 125, 318)$ & $1$ & No & 0010111001 1110101100 11111110\\
86 & $(18, 125, 321)$ & $2$ & No & 0011111001 0110011100 11111110\\
87 & $(18, 125, 349)$ & $1$ & No & 0011111001 1110011101 10011011\\
88 & $(18, 125, 351)$ & $1$ & No & 0011111001 0110011111 01011110\\
89 & $(18, 126, 349)$ & $1$ & No & 0011111001 1110011101 01011110\\
90 & $(18, 127, 355)$ & $1$ & No & 0011111001 1110011100 11111010\\
91 & $(18, 128, 383)$ & $1$ & No & 0011111001 1110011110 11001110\\
92 & $(18, 128, 395)$ & $1$ & No & 0011111010 1110101101 01111100\\
93 & $(18, 129, 347)$ & $2$ & No & 0011111001 1110011100 11011110\\
94 & $(18, 129, 348)$ & $1$ & No & 0001111001 1110101101 11011110\\
95 & $(18, 129, 370)$ & $1$ & No & 0011011001 1110011101 11011110\\
96 & $(18, 129, 399)$ & $1$ & No & 0011111011 1110011110 10001110\\
97 & $(18, 131, 393)$ & $1$ & No & 0011111011 1110111100 10001011\\
98 & $(18, 131, 394)$ & $1$ & No & 0011111010 1110101101 11111000\\
99 & $(18, 131, 398)$ & $1$ & No & 0011111001 1110111100 11110010\\
100 & $(19, 125, 318)$ & $1$ & No & 0010111001 1110101100 11111111\\
101 & $(19, 125, 321)$ & $2$ & No & 0011111001 0110011100 11111111\\
102 & $(19, 128, 353)$ & $1$ & No & 0011111001 0110011111 11011011\\
103 & $(19, 129, 347)$ & $2$ & No & 0011111001 1110011100 11011111\\
104 & $(19, 129, 348)$ & $1$ & No & 0001111001 1110101101 11011111\\
105 & $(19, 129, 349)$ & $1$ & No & 0011111001 1110011101 01011111\\
106 & $(19, 129, 351)$ & $1$ & No & 0011111001 0110011111 01011111\\
107 & $(19, 129, 370)$ & $1$ & No & 0011011001 1110011101 11011111\\
108 & $(19, 132, 355)$ & $1$ & No & 0011111001 1110011100 11111011\\
109 & $(19, 132, 383)$ & $1$ & No & 0011111001 1110011110 11001111\\
110 & $(19, 132, 399)$ & $1$ & No & 0101111011 0110110001 11011111\\
111 & $(19, 135, 355)$ & $1$ & No & 0011111001 1110011100 11111110\\
112 & $(19, 135, 357)$ & $1$ & No & 0011111001 0110011111 11011110\\
113 & $(19, 135, 393)$ & $1$ & No & 0011111001 1110111100 11111001\\
114 & $(19, 135, 394)$ & $1$ & No & 0011111010 1110101101 11111001\\
115 & $(19, 135, 395)$ & $1$ & No & 0011111010 1110101101 01111110\\
116 & $(19, 135, 403)$ & $1$ & No & 0101111011 1110101101 10101011\\
117 & $(19, 136, 395)$ & $1$ & No & 0011111010 1110101101 01111101\\
118 & $(19, 136, 399)$ & $1$ & No & 0011111011 1110011110 10001111\\
119 & $(19, 138, 398)$ & $1$ & No & 0011111001 1110111100 11111010\\
120 & $(19, 138, 399)$ & $1$ & No & 0011111010 1110101101 11111010\\
121 & $(19, 138, 402)$ & $1$ & No & 0011111011 1110111100 10101011\\
122 & $(19, 138, 407)$ & $1$ & No & 0110111011 1110111101 11001100\\
123 & $(19, 139, 387)$ & $1$ & No & 0011111001 1110011110 11011110\\
124 & $(19, 139, 391)$ & $1$ & No & 0011111011 0110011101 11011110\\
125 & $(19, 139, 403)$ & $1$ & No & 0011111011 1110011110 11001110\\
126 & $(19, 140, 398)$ & $1$ & No & 0011111001 1110111100 11110011\\
127 & $(19, 143, 398)$ & $1$ & No & 0011111011 1110011100 11011110\\
128 & $(20, 135, 355)$ & $1$ & No & 0011111001 1110011100 11111111\\
129 & $(20, 135, 357)$ & $1$ & No & 0011111001 0110011111 11011111\\
130 & $(20, 139, 387)$ & $1$ & No & 0011111001 1110011110 11011111\\
131 & $(20, 139, 391)$ & $1$ & No & 0011111011 0110011101 11011111\\
132 & $(20, 139, 395)$ & $1$ & No & 0011111010 1110101101 01111111\\
133 & $(20, 139, 399)$ & $1$ & No & 0101110011 1110110100 11111111\\
134 & $(20, 142, 402)$ & $1$ & No & 0101111011 1110111100 11111001\\
135 & $(20, 142, 403)$ & $1$ & No & 0101111011 1110110100 11111110\\
136 & $(20, 143, 398)$ & $1$ & No & 0011111001 1110111100 11111011\\
137 & $(20, 143, 398)$ & $1$ & No & 0011111011 1110011100 11011111\\
138 & $(20, 143, 399)$ & $1$ & No & 0011111010 1110101101 11111011\\
139 & $(20, 143, 403)$ & $1$ & No & 0011111011 1110011110 11001111\\
140 & $(20, 143, 403)$ & $1$ & No & 0101111011 1110110100 11111101\\
141 & $(20, 145, 407)$ & $1$ & No & 0101111011 1110111100 11111010\\
142 & $(20, 146, 398)$ & $1$ & No & 0011111001 1110111100 11111110\\
143 & $(20, 146, 399)$ & $1$ & No & 0011111010 1110101101 11111110\\
144 & $(20, 147, 407)$ & $1$ & No & 0101111011 1110111100 11110011\\
145 & $(20, 150, 407)$ & $1$ & No & 0011111011 1110011110 11011110\\
146 & $(21, 146, 398)$ & $1$ & No & 0011111001 1110111100 11111111\\
147 & $(21, 146, 399)$ & $1$ & No & 0011111010 1110101101 11111111\\
148 & $(21, 146, 403)$ & $1$ & No & 0101111011 1110110100 11111111\\
149 & $(21, 150, 407)$ & $1$ & No & 0011111011 1110011110 11011111\\
150 & $(21, 150, 407)$ & $1$ & No & 0101111011 1110111100 11111011\\
151 & $(21, 153, 407)$ & $1$ & No & 0101111011 1110111100 11111110\\
152 & $(22, 153, 407)$ & $1$ & No & 0101111011 1110111100 11111111
\label{table:counterexample list}
\end{longtable}

\Cref{figure:SLP_3,figure:SLP_3 planar}, which we have verified as counterexamples, are the first and 41st counterexample candidates, respectively.

The 152nd counterexample candidate (\Cref{figure:maximal candidate}) has the largest number of edges. It is constructed by adding two black vertices and seven dashed edges to the complete graph of six vertices. It contains all listed graphs as subgraphs, e.g., it contains the first counterexample candidate as shown in \Cref{figure:contain}.

\begin{figure}[tb]
    \begin{minipage}{.49\textwidth}
        \centering
        \begin{tikzpicture}[scale=0.7]
            \node[draw,circle,line width=1pt,fill=black](a) at (0,6){};
            \node[draw,circle,line width=1pt,fill=black](b) at (0,4){};
            \node[draw,circle,line width=1pt](1) at (2,2){};
            \node[draw,circle,line width=1pt](2) at (4,0){};
            \node[draw,circle,line width=1pt](3) at (2,-2){};
            \node[draw,circle,line width=1pt](4) at (-2,-2){};
            \node[draw,circle,line width=1pt](5) at (-4,0){};
            \node[draw,circle,line width=1pt](6) at (-2,2){};
    
            \draw[line width=1pt,dashed](a)--(b);
            \draw[line width=1pt,dashed](a)to[out=0,in=90](2);
            \draw[line width=1pt,dashed](a)to[out=180,in=90](5);
            \draw[line width=1pt,dashed](b)--(1);
            \draw[line width=1pt,dashed](b)to[out=0,in=110](2);
            \draw[line width=1pt,dashed](b)to[out=180,in=70](5);
            \draw[line width=1pt,dashed](b)--(6);
            \draw[line width=1pt](1)--(2);
            \draw[line width=1pt](1)--(3);
            \draw[line width=1pt](1)--(4);
            \draw[line width=1pt](1)--(5);
            \draw[line width=1pt](1)--(6);
            \draw[line width=1pt](2)--(3);
            \draw[line width=1pt](2)--(4);
            \draw[line width=1pt](2)--(5);
            \draw[line width=1pt](2)--(6);
            \draw[line width=1pt](3)--(4);
            \draw[line width=1pt](3)--(5);
            \draw[line width=1pt](3)--(6);
            \draw[line width=1pt](4)--(5);
            \draw[line width=1pt](4)--(6);
            \draw[line width=1pt](5)--(6);
        \end{tikzpicture}
        \caption{The candidate graph with the largest number of edges}
        \label{figure:maximal candidate}
    \end{minipage}
    \begin{minipage}{.49\textwidth}
        \centering
        \begin{tikzpicture}[scale=0.7]
            \node[draw,circle,line width=1pt,fill=black](5) at (0,2){};
            \node[draw,circle,line width=1pt](2) at (4,0){};
            \node[draw,circle,line width=1pt,fill=black](8) at (0,0){};
            \node[draw,circle,line width=1pt](1) at (-4,0){};
            \node[draw,circle,line width=1pt](4) at (2,-2){};
            \node[draw,circle,line width=1pt](7) at (4,-4){};
            \node[draw,circle,line width=1pt](3) at (-2,-2){};
            \node[draw,circle,line width=1pt](6) at (-4,-4){};
    
            \draw[line width=1pt,dashed](1)--(5);
            \draw[line width=1pt](1)--(6);
            \draw[line width=1pt](1)--(7);
            \draw[line width=1pt,dashed](1)--(8);
            \draw[line width=1pt,dashed](2)--(5);
            \draw[line width=1pt](2)--(6);
            \draw[line width=1pt](2)--(7);
            \draw[line width=1pt,dashed](2)--(8);
            \draw[line width=1pt](3)--(6);
            \draw[line width=1pt,dashed](3)--(8);
            \draw[line width=1pt](4)--(7);
            \draw[line width=1pt,dashed](4)--(8);
            \draw[line width=1pt,dashed](5)--(8);
        \end{tikzpicture}
        \caption{The graph in \Cref{figure:maximal candidate} contains the graph in \Cref{figure:SLP_3}}
        \label{figure:contain}
    \end{minipage}
\end{figure}

\section{Partial Failure of the $\SLP_2$}\label{section:SLP_2}

First of all, every graph of eight or fewer vertices has the $\SLP_2$. However, only one graph (\Cref{figure:SLP_2}) does not have the $\SLP_2$ with fixed element $\ell=\partial_1+\dots+\partial_n$. This graph also does not have the $\SLP_3$ with the same element $\ell$, but has the strong Lefschetz property with other elements.

This $\ell$ is typically one of the Lefschetz elements of the strong Lefschetz property. A previous work~\cite{yazawa2021eigenvalues} showed that the complete or complete bipartite graph has the $\SLP_1$ with this element $\ell$. Besides, $\det\bm{H}_{B_k}(f)(1,\dots,1)$ is calculated in~\cite{maeno2016sperner}. For any set of edges $I$, $\left(\left(\prod_{i\in I}\partial_i\right)f_G\right)(1,\dots,1)$ is equal to the number of spanning trees in $G$ which contains all of edges in $I$.

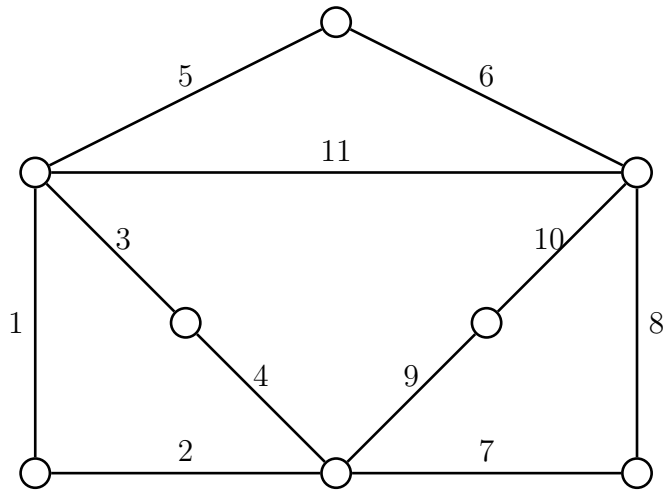
\begin{figure}[tb]
    \centering
    \begin{tikzpicture}
        \node[draw,circle,line width=1pt](3) at (0,2){};
        \node[draw,circle,line width=1pt](8) at (4,0){};
        \node[draw,circle,line width=1pt](6) at (-4,0){};
        \node[draw,circle,line width=1pt](5) at (2,-2){};
        \node[draw,circle,line width=1pt](4) at (4,-4){};
        \node[draw,circle,line width=1pt](2) at (-2,-2){};
        \node[draw,circle,line width=1pt](1) at (-4,-4){};
        \node[draw,circle,line width=1pt](7) at (0,-4){};

        \draw[line width=1pt](1)--(6)node[pos=0.5,left]{$1$};
        \draw[line width=1pt](1)--(7)node[pos=0.5,above]{$2$};
        \draw[line width=1pt](2)--(6)node[pos=0.4,above]{$3$};
        \draw[line width=1pt](2)--(7)node[pos=0.5,above]{$4$};
        \draw[line width=1pt](3)--(6)node[pos=0.5,above]{$5$};
        \draw[line width=1pt](3)--(8)node[pos=0.5,above]{$6$};
        \draw[line width=1pt](4)--(7)node[pos=0.5,above]{$7$};
        \draw[line width=1pt](4)--(8)node[pos=0.5,right]{$8$};
        \draw[line width=1pt](5)--(7)node[pos=0.5,above]{$9$};
        \draw[line width=1pt](5)--(8)node[pos=0.4,above]{$10$};
        \draw[line width=1pt](6)--(8)node[pos=0.5,above]{$11$};
    \end{tikzpicture}
    \caption{The graph does not have the $\SLP_2$ with the element $\ell=\partial_1+\dots+\partial_n$}
    \label{figure:SLP_2}
\end{figure}

Let $G$ be the graph shown in \Cref{figure:SLP_2}. The number of variables $n=11$, the socle degree $d=7$, the minimal number of generators $\mu(\Ann_Q(f_G))=42$, and the Hilbert function $(\dim_{\mathbb{R}}A_i)_{i=0}^d=(1,11,51,112,112,51,11,1)$. As in \Cref{section:SLP_3}, we construct an $\mathbb{R}$-basis $B_2$ of $A_2$ by the same method and find a non-zero vector $\bm{F}=(F_1,\dots,F_{51})^\mathrm{T}\in\mathbb{R}^{51}$ such that $\bm{H}_{B_2}(f_G)(1,\dots,1)\bm{F}=\bm{0}$.

The vector $\bm{F}$ has only $16$ non-zero components. Our $B_2$ and $\bm{F}$ are on \Cref{table:B_2 and F}. Empty cells mean that the corresponding monomials are not in $B_2$: $\partial_2\partial_4$, $\partial_5\partial_{11}$, $\partial_6\partial_{11}$, and $\partial_8\partial_{10}$. Non-empty cells contain corresponding $F_i$, e.g., $F_1=10$ for $\alpha_1=\partial_1\partial_2$. \Cref{table:B_2 and F} shows symmetries of the squares $\{\partial_1,\dots,\partial_9\}\times\{\partial_2,\dots,\partial_{10}\}$ and of the rightmost column.
\begin{table}[tb]
\centering
\caption{The basis $B_2$ and the vector $\bm{F}$}
\begin{tabular}{r|rrrrrrrrrrr}
& $\partial_{1}$ & $\partial_{2}$ & $\partial_{3}$ & $\partial_{4}$ & $\partial_{5}$ & $\partial_{6}$ & $\partial_{7}$ & $\partial_{8}$ & $\partial_{9}$ & $\partial_{10}$ & $\partial_{11}$\\ \hline
$\partial_{1}$ & \multicolumn{1}{r|}{}& \multicolumn{1}{r|}{$10$}& \multicolumn{1}{r|}{$0$}& \multicolumn{1}{r|}{$-4$}& \multicolumn{1}{r|}{$0$}& \multicolumn{1}{r|}{$0$}& \multicolumn{1}{r|}{$0$}& \multicolumn{1}{r|}{$0$}& \multicolumn{1}{r|}{$0$}& \multicolumn{1}{r|}{$0$}& \multicolumn{1}{r|}{$-3$}\\ \cline{3-12}
$\partial_{2}$ & & \multicolumn{1}{r|}{}& \multicolumn{1}{r|}{$-4$}& \multicolumn{1}{r|}{}& \multicolumn{1}{r|}{$0$}& \multicolumn{1}{r|}{$0$}& \multicolumn{1}{r|}{$0$}& \multicolumn{1}{r|}{$0$}& \multicolumn{1}{r|}{$0$}& \multicolumn{1}{r|}{$0$}& \multicolumn{1}{r|}{$-3$}\\ \cline{4-12}
$\partial_{3}$ & & & \multicolumn{1}{r|}{}& \multicolumn{1}{r|}{$10$}& \multicolumn{1}{r|}{$0$}& \multicolumn{1}{r|}{$0$}& \multicolumn{1}{r|}{$0$}& \multicolumn{1}{r|}{$0$}& \multicolumn{1}{r|}{$0$}& \multicolumn{1}{r|}{$0$}& \multicolumn{1}{r|}{$-3$}\\ \cline{5-12}
$\partial_{4}$ & & & & \multicolumn{1}{r|}{}& \multicolumn{1}{r|}{$0$}& \multicolumn{1}{r|}{$0$}& \multicolumn{1}{r|}{$0$}& \multicolumn{1}{r|}{$0$}& \multicolumn{1}{r|}{$0$}& \multicolumn{1}{r|}{$0$}& \multicolumn{1}{r|}{$-3$}\\ \cline{6-12}
$\partial_{5}$ & & & & & \multicolumn{1}{r|}{}& \multicolumn{1}{r|}{$0$}& \multicolumn{1}{r|}{$0$}& \multicolumn{1}{r|}{$0$}& \multicolumn{1}{r|}{$0$}& \multicolumn{1}{r|}{$0$}& \multicolumn{1}{r|}{}\\ \cline{7-12}
$\partial_{6}$ & & & & & & \multicolumn{1}{r|}{}& \multicolumn{1}{r|}{$0$}& \multicolumn{1}{r|}{$0$}& \multicolumn{1}{r|}{$0$}& \multicolumn{1}{r|}{$0$}& \multicolumn{1}{r|}{}\\ \cline{8-12}
$\partial_{7}$ & & & & & & & \multicolumn{1}{r|}{}& \multicolumn{1}{r|}{$-10$}& \multicolumn{1}{r|}{$0$}& \multicolumn{1}{r|}{$4$}& \multicolumn{1}{r|}{$3$}\\ \cline{9-12}
$\partial_{8}$ & & & & & & & & \multicolumn{1}{r|}{}& \multicolumn{1}{r|}{$4$}& \multicolumn{1}{r|}{}& \multicolumn{1}{r|}{$3$}\\ \cline{10-12}
$\partial_{9}$ & & & & & & & & & \multicolumn{1}{r|}{}& \multicolumn{1}{r|}{$-10$}& \multicolumn{1}{r|}{$3$}\\ \cline{11-12}
$\partial_{10}$ & & & & & & & & & & \multicolumn{1}{r|}{}& \multicolumn{1}{r|}{$3$}\\ \cline{12-12}
$\partial_{11}$ & & & & & & & & & & & \multicolumn{1}{r|}{}
\end{tabular}
\label{table:B_2 and F}
\end{table}

\section*{Acknowledgments}

I would like to thank Toshiaki Maeno and Yasuhide Numata for their useful discussions and comments. I am grateful to Akihito Wachi for his remarks about \Cref{remark: biconnected components} and for his recalculation. This work was supported by JST SPRING, Grant Number JPMJSP2114.

\begin{bibdiv}
\begin{biblist}

\bib{DEMILLO1978193}{article}{
      author={Demillo, Richard~A.},
      author={Lipton, Richard~J.},
       title={A probabilistic remark on algebraic program testing},
        date={1978},
     journal={Information Processing Letters},
      volume={7},
      number={4},
       pages={193\ndash 195},
}

\bib{harima2013lefschetz}{book}{
      author={Harima, Tadahito},
      author={Maeno, Toshiaki},
      author={Morita, Hideaki},
      author={Numata, Yasuhide},
      author={Wachi, Akihito},
      author={Watanabe, Junzo},
       title={The {L}efschetz properties},
      series={Lecture Notes in Mathematics},
   publisher={Springer, Heidelberg},
        date={2013},
      volume={2080},
      review={\MR{3112920}},
}

\bib{ikeda1996results}{article}{
      author={Ikeda, Hidemi},
       title={Results on {D}ilworth and {R}ees numbers of {A}rtinian local rings},
        date={1996},
        ISSN={0289-2316},
     journal={Japan. J. Math. (N.S.)},
      volume={22},
      number={1},
       pages={147\ndash 158},
         url={https://doi.org/10.4099/math1924.22.147},
      review={\MR{1394376}},
}

\bib{maeno2012sperner}{incollection}{
      author={Maeno, Toshiaki},
      author={Numata, Yasuhide},
       title={Sperner property, matroids and finite-dimensional {G}orenstein algebras},
        date={2012},
   booktitle={Tropical geometry and integrable systems},
      series={Contemp. Math.},
      volume={580},
   publisher={Amer. Math. Soc., Providence, RI},
       pages={73\ndash 84},
      review={\MR{2985388}},
}

\bib{maeno2016sperner}{article}{
      author={Maeno, Toshiaki},
      author={Numata, Yasuhide},
       title={Sperner property and finite-dimensional {G}orenstein algebras associated to matroids},
        date={2016},
     journal={J. Commut. Algebra},
      volume={8},
      number={4},
       pages={549\ndash 570},
      review={\MR{3566530}},
}

\bib{MR2594646}{article}{
      author={Maeno, Toshiaki},
      author={Watanabe, Junzo},
       title={Lefschetz elements of {A}rtinian {G}orenstein algebras and {H}essians of homogeneous polynomials},
        date={2009},
        ISSN={0019-2082,1945-6581},
     journal={Illinois J. Math.},
      volume={53},
      number={2},
       pages={591\ndash 603},
         url={http://projecteuclid.org/euclid.ijm/1266934795},
      review={\MR{2594646}},
}

\bib{murai2021strictness}{article}{
      author={Murai, Satoshi},
      author={Nagaoka, Takahiro},
      author={Yazawa, Akiko},
       title={Strictness of the log-concavity of generating polynomials of matroids},
        date={2021},
     journal={J. Combin. Theory Ser. A},
      volume={181},
       pages={Paper No. 105351, 22},
      review={\MR{4223331}},
}

\bib{kirchhoff}{article}{
      author={Nagaoka, Takahiro},
      author={Yazawa, Akiko},
       title={Strict log-concavity of the {K}irchhoff polynomial and its applications to the strong {L}efschetz property},
        date={2021},
     journal={J. Algebra},
      volume={577},
       pages={175\ndash 202},
      review={\MR{4234203}},
}

\bib{oxleymatroidtheory}{book}{
      author={Oxley, James},
       title={Matroid theory},
     edition={Second Edition},
      series={Oxford Graduate Texts in Mathematics},
   publisher={Oxford University Press, Oxford},
        date={2011},
      volume={21},
      review={\MR{2849819}},
}

\bib{schwartz}{article}{
      author={Schwartz, J.~T.},
       title={Fast probabilistic algorithms for verification of polynomial identities},
        date={1980},
     journal={J. Assoc. Comput. Mach.},
      volume={27},
      number={4},
       pages={701\ndash 717},
      review={\MR{594695}},
}

\bib{sagemath}{manual}{
      author={{The Sage Developers}},
       title={{S}agemath, the {S}age {M}athematics {S}oftware {S}ystem ({V}ersion 9.5)},
        date={2022},
        note={\\{\tt https://www.sagemath.org}},
}

\bib{watanabe2000hessian}{incollection}{
      author={Watanabe, Junzo},
       title={A remark on the {H}essian of homogeneous polynomials},
   booktitle={The {C}urves {S}eminar at {Q}ueen's, vol. {XIII}, {Q}ueen's {P}apers {P}ure and {A}ppl. {M}ath. \textbf{119} (2000)},
      editor={Geramita, Anthony~V.},
      volume={119},
   publisher={Queen's University},
     address={Kingston, Ontario, Canada},
       pages={171\ndash 178},
}

\bib{yazawa2021eigenvalues}{article}{
      author={Yazawa, Akiko},
       title={The eigenvalues of {H}essian matrices of the complete and complete bipartite graphs},
        date={2021},
     journal={J. Algebraic Combin.},
      volume={54},
      number={4},
       pages={1137\ndash 1157},
      review={\MR{4348920}},
}

\bib{zippel}{incollection}{
      author={Zippel, Richard},
       title={Probabilistic algorithms for sparse polynomials},
        date={1979},
   booktitle={Symbolic and algebraic computation ({EUROSAM} '79, {I}nternat. {S}ympos., {M}arseille, 1979)},
      series={Lecture Notes in Comput. Sci.},
      volume={72},
   publisher={Springer, Berlin-New York},
       pages={216\ndash 226},
      review={\MR{575692}},
}

\end{biblist}
\end{bibdiv}

\end{document}